\tikzset{>=latex}
\newtheorem{lemma}{Lemma}[section]
\newtheorem{theorem}[lemma]{Theorem}
\newtheorem{maintheorem}{Theorem}
\newtheorem{corollary}[lemma]{Corollary}
\newtheorem{proposition}[lemma]{Proposition}
\theoremstyle{definition}
\newtheorem{definition}[lemma]{Definition}
\newtheorem{remark}[lemma]{Remark}
\newenvironment{example}
  {\pushQED{\qed}\examplex}
  {\popQED\endexamplex}
\newtheorem{question}[lemma]{Question}
\newcommand{\PP}{\ensuremath{\mathbb{P}}} 
\newcommand{\Gr}[2]{\ensuremath{\operatorname{Gr}\bigl(#1,\PP(#2)\!\bigr)}}
\newcommand{\Fl}[2]{\ensuremath{\operatorname{Flag}\bigl(#1,\PP(#2)\!\bigr)}}
\newcommand{\Res}[2]{\ensuremath{\operatorname{Flag}\bigl(#1,#2,\PP(E)\!\bigr)}}
\DeclareMathOperator{\Hilb}{Hilb}
\DeclareMathOperator{\Hom}{Hom}
\DeclareMathOperator{\Proj}{Proj}
\DeclareMathOperator{\Spec}{Spec}
\DeclareMathOperator{\Sym}{Sym}
\begin{document}

\vspace*{-3em}

\title[Smooth Hilbert Schemes]{Smooth Hilbert Schemes\\[-3pt]
  \mbox{\footnotesize their classification and geometry}
  \vspace{-0.5em}}

\author[R.~Skjelnes]{Roy Skjelnes}
\address{Roy Skjelnes: Department of Mathematics, Royal Institute of Technology
  (KTH), Stockholm, 100~44, Sweden;%
  {\normalfont \texttt{skjelnes@math.kth.se}}}

\author[G.G.~Smith]{Gregory G.{} Smith}
\address{Gregory G.{} Smith: Department of Mathematics and Statistics, Queen's
  University, Kingston, Ontario, K7L 3N6, Canada;%
  {\normalfont \texttt{ggsmith@mast.queensu.ca}}} 

\thanks{2020 \emph{Mathematics Subject Classification}. 14C05; 14J10, 14M15}
% \date{14 February 2022}

\vspace*{-0.5em}

\begin{abstract}
  Closed subschemes in projective space with a fixed Hilbert polynomial are
  parametrized by a Hilbert scheme.  We classify the smooth ones.  We identify
  numerical conditions on a polynomial that completely determine when the
  Hilbert scheme is smooth.  We also reinterpret these smooth Hilbert schemes
  as generalized partial flag varieties and describe the subschemes being
  parametrized.
\end{abstract}

\maketitle

\vspace*{-1.5em}
%%%%%%%%%%%%%%%%%%%%%%%%%%%%%%%%%%%%%%%%%%%%%%%%%%%%%%%%%%%%%%%%%%%%%%%%%%%%%%
\section*{Overview}
%\addcontentsline{toc}{section}{Overview}
%\addtocounter{section}{1}
%\addtocounter{lemma}{-1}

\noindent
Hilbert schemes are crucial for compactifying families of subschemes and
constructing moduli spaces.  Among these parameter spaces, Hilbert schemes of
points on a projective surface are exceptional.  Being smooth, they have a
wider range of applications including deep results in algebraic geometry,
combinatorics, and representation theory; see \cites{Bea, Hai, Groj, Nak}.  In
contrast, little is known about geometric properties of other Hilbert schemes.
Even the geometry of $\Hilb^{p}(\PP^{m})$, the Hilbert scheme parametrizing
closed subschemes in projective $m$-space $\PP^{m}$ with Hilbert polynomial
$p$, is poorly understood when $m \geqslant 3$.  Although
Hartshorne~\cite{Har} shows that each $\Hilb^{p}(\PP^{m})$ is path-connected,
celebrated insights into these Hilbert schemes typically highlight
pathologies. For example, Mumford~\cite{Mum} exhibits an irreducible component
in $\Hilb^{14t-23}(\PP^3)$ that is generically non-reduced,
Ellia--Hirschowitz--Mezzetti~\cite{EHM} show that the number of irreducible
components in $\Hilb^{dt+c}(\PP^3)$ is not bounded by a polynomial in
$\mathbb{Q}[c,d]$, and Vakil~\cite{Vak} proves that every singularity type
appears in some $\Hilb^{p}(\PP^{\,4})$.  As a counterpoint, this article
classifies the smooth Hilbert schemes $\Hilb^{p}(\PP^{m})$ and describes their
geometry.

% %% ---------------------------------------------------------------------------
% \subsection*{Classification}

Our primary theorem uses integer partitions to characterize smooth Hilbert
schemes.  An integer partition $\lambda$ is an $r$-tuple
$\lambda \coloneqq (\lambda_1, \lambda_2, \dotsc, \lambda_r)$ of integers
satisfying
$\lambda_1 \geqslant \lambda_2 \geqslant \dotsb \geqslant \lambda_r \geqslant
1$.

\begin{maintheorem}
  \label{t:main}
  For any positive integer $m$ and any polynomial $p$ in $\mathbb{Q}[t]$, the
  Hilbert scheme $\Hilb^{p}( \PP^{m})$ is a smooth irreducible variety if and
  only if there exists an integer partition
  $\lambda \coloneqq (\lambda_1, \lambda_2, \dotsc, \lambda_r)$ such that
  \[
    p(t) = \sum_{i=1}^{r} \binom{t + \lambda_i - i}{\lambda_i -1} 
  \]
  and one of the following seven conditions holds:
  \begin{compactenum}[\upshape (1)]
  \item $m = 2 \geqslant \lambda_1$,
  \item $m \geqslant \lambda_1$ and $\lambda_r \geqslant 2$, 
  \item $\lambda = (1)$ or $\lambda = (m^{r-2}, \lambda_{r-1}, 1) =
    (\smash{\overbrace{m, m, \dotsc, m}^{\text{\tiny\upshape $(r \!-\! 2)$-times}}},
    \lambda_{r-1}, 1)$ 
    where $r \geqslant 2$ and $m \geqslant \lambda_{r-1} \geqslant 1$,
  \item $\lambda = (m^{r-s-3}, \lambda_{r-s-2}^{s+2}, 1)$ where
    $r - 3 \geqslant s \geqslant 0$ and
    $m - 1 \geqslant \lambda_{r-s-2} \geqslant 3$,
  \item $\lambda = (m^{r-s-5}, 2^{s+4}, 1)$ where
    $r-5 \geqslant s \geqslant 0$,
  \item $\lambda = (m^{r-3}, 1^3)$ where $r \geqslant 3$,
  \item $\lambda = (m+1)$ or $r = 0$.
  \end{compactenum}
\end{maintheorem}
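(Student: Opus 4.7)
My plan rests on two ingredients from the literature: Hartshorne's theorem that $\Hilb^{p}(\PP^{m})$ is connected, and the Reeves--Stillman result that the lex-segment ideal is a smooth point of $\Hilb^{p}(\PP^{m})$. Because a smooth scheme is locally integral, connectedness plus global smoothness forces irreducibility, so if $\Hilb^{p}(\PP^{m})$ is smooth it must coincide with the distinguished (Reeves--Stillman) component containing the lex point. Consequently, one implication of the theorem reduces to showing that whenever $\lambda$ fails all seven conditions, $\Hilb^{p}(\PP^{m})$ carries an additional irreducible component.

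Before attacking either direction, I would set up the dictionary between Hilbert polynomials and partitions. The formula $p(t) = \sum_{i=1}^{r} \binom{t+\lambda_{i}-i}{\lambda_{i}-1}$ is a reindexing of the Macaulay/Gotzmann decomposition and gives a bijection between partitions $\lambda$ with $\lambda_{1} \leqslant m+1$ and the polynomials actually arising as Hilbert polynomials of subschemes of $\PP^{m}$. From $\lambda$ one directly reads off the dimension, degree, and regularity of the generic subscheme, together with the degrees of the generators of the lex-segment ideal.

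For the "if" direction I would build each smooth case as an explicit smooth variety carrying a universal family. Condition (7) yields the empty scheme or a single point; condition (1) recovers Fogarty's classical theorem that $\Hilb^{p}(\PP^{2})$ is smooth. In cases (2)--(6), the shape of $\lambda$ encodes a short nested chain of subspaces of $\PP^{m}$, and I would parametrise the corresponding subschemes by an appropriate generalised partial flag variety $\operatorname{Flag}(\mathbf{d},\PP^{m})$, or by a relative Grassmannian bundle over one. A flat family over such a smooth parameter space produces a classifying morphism to $\Hilb^{p}(\PP^{m})$; the combinatorial hypotheses imposed in each case are precisely what is needed to write down the inverse using tautological subbundles, identifying the Hilbert scheme with a smooth variety.

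The main obstacle is the converse: for every partition $\lambda$ outside the seven families, I must exhibit a second irreducible component. The approach is to construct a positive-dimensional family of subschemes $Z \subset \PP^{m}$ with Hilbert polynomial $p$ that is not contained in the distinguished component -- typically $Z$ is a disjoint union of a large linear piece read from the head of $\lambda$ together with a short lower-dimensional piece read from its tail $(\lambda_{r-s-2},\dotsc,\lambda_{r})$. A tangent-space calculation via $\Hom(I_{Z},\mathcal{O}_{Z})$ then shows the resulting locus lies on a component distinct from the distinguished one. The technical heart of the argument is the combinatorial case analysis that sorts partitions by the length and shape of this tail; carrying it out, one finds that the seven listed families are exactly the partitions where every such construction either fails to produce subschemes of the right Hilbert polynomial or collapses onto the distinguished component.
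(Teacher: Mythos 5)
Your reduction of the converse to exhibiting a second irreducible component has a genuine gap: connectedness plus smoothness does imply irreducibility, but the contrapositive you need — that every $\Hilb^{p}(\PP^{m})$ outside the seven families is \emph{reducible} — is false. For instance, take $\lambda = (1^{4})$ and $m = 3$ (four points in $\PP^{3}$, which lies outside all seven conditions): $\Hilb^{4}(\PP^{3})$ is irreducible, yet singular. Singular Hilbert schemes in the excluded range need not acquire their singularities from the meeting of two components, so no construction of a ``second family of subschemes'' can cover all cases. The paper's argument is different in kind: for each excluded partition it writes down a specific Borel-fixed ideal — the nearly lexicographic ideal $K = L(\lambda) \cap J$, a residual flag with an embedded point — shows this point lies \emph{on the lexicographic component} (via Reeves' result on Borel-fixed ideals with the same saturation), and then proves singularity by an explicit computation of $\Hom_{R}(K, R/K)_{0}$, exhibiting more independent tangent vectors than the Reeves--Stillman dimension of that component allows; the case of few distinct parts (points, $(2^2,1)$, $(2^3,1)$) is handled by known examples. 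Your proposal never produces such a tangent-space lower bound at a concrete point, and the combinatorial ``case analysis of tails'' you defer to is exactly where the real work lies.

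The ``if'' direction as you sketch it also overreaches for conditions (4)--(6): those Hilbert schemes are \emph{not} isomorphic to generalized flag varieties or Grassmannian bundles (they have two Borel-fixed points, hence at least two ways to attach the extra point, and are only \emph{birational} to a product of $\PP^{m}$ with a parameter space of residual flags), so identifying them with a smooth parameter space via a universal family with a tautological inverse cannot succeed there. In the paper, smoothness in cases (4)--(5) rests on Ramkumar's tangent-space computation at the non-lexicographic Borel-fixed point (extended to all characteristics by Staal), case (6) and case (1) rest on Fogarty and Cheah for few points, and even in cases (2)--(3) the isomorphism with the residual-flag space is proved by combining Staal's result that a unique Borel-fixed point forces a smooth irreducible Hilbert scheme with a dimension comparison, not by constructing an inverse classifying map directly. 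So both halves of your plan would need to be replaced by tangent-space computations at distinguished Borel-fixed points, which is the mechanism the paper actually uses.
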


These combinatorial conditions encode the underlying geometry. For instance,
the integer partition $\lambda = (\lambda_1)$ corresponds to the Grassmannian
of $(\lambda_1-1)$-dimensional planes in $\PP^{m}$ and
$\lambda = (m^r) = (m, m, \dotsc, m)$ corresponds to the Hilbert scheme
parametrizing hypersurfaces of degree $r$ in $\PP^{m}$; both well-known
families are covered by Condition~2 and the case $\lambda = (1)$ in
Condition~3.  More generally, every point on a Hilbert scheme satisfying
Condition~3 where $\lambda = (m^d, \ell, 1)$ corresponds to the
scheme-theoretic union of a hypersurface of degree $d$, a linear subspace of
dimension $\ell - 1$, and a point.  Similarly, general points on a Hilbert
scheme satisfying Condition~5 with $\lambda = (m^{d}, 2^{c}, 1)$ correspond to
the union of a hypersurface of degree $d$, a plane curve of degree $c$, and a
point, whereas those satisfying Condition~4 with
$\lambda = (m^{d}, \ell^{c}, 1)$ correspond to the union of a hypersurface of
degree $d$, a hypersurface of degree $c$ contained in an $\ell$-dimensional
linear subspace, and a point.  The minor discrepancies in Conditions~4 and~5,
arising from the integer partitions $\lambda = (2^2,1)$ and
$\lambda = (2^3,1)$, are required because the Hilbert schemes with points
corresponding to two skew lines and a twisted cubic curve are singular; see
Example~\ref{e:gaps}.  For Condition~6, a general point on the Hilbert scheme
with $\lambda = (m^{d}, 1^3)$ corresponds to the union of a hypersurface of
degree $d$ and $3$ reduced points.  For completeness, observe that the unique
point on a Hilbert scheme satisfying Condition~7 corresponds to either $\PP^m$
or the empty scheme.

The list of conditions in Theorem~\ref{t:main} is new and answers Lin's
question~\cite{Lin}.  However, the challenge lies in proving that this list is
exhaustive.  Understanding the geometry of Condition~2 is, unexpectedly, the
key to overcoming this challenge.  Our geometric interpretation in this
condition relies on expanding the traditional notion of a residual scheme.  To
be more precise, consider a hypersurface $D$ in $\mathbb{P}^{m}$.  The
residual scheme of a closed immersion $D \subseteq X$ in $\PP^{m}$ is the
unique closed subscheme $Y \subset X$ such that their defining ideal sheaves
on $\PP^{m}$ satisfy $\mathcal{I}_X = \mathcal{I}_Y \cdot \mathcal{I}_D$.
Geometrically, the scheme $X$ is the union of $Y$ and $D$.  Building on this
concept, a closed immersion $Y \subset X$ in $\PP^{m}$ is a \emph{residual
  inclusion} if there exists a linear subspace $\Lambda$ in $\PP^{m}$
containing $X$ and a hypersurface $D$ in $\Lambda$ such that $Y$ is the
residual scheme of $D \subseteq X$ in $\Lambda$.  We define a \emph{residual
  flag} in $\PP^{m}$ to be a chain
\[
  \varnothing = X_{e+1} \subset X_{e} \subset \dotsb \subset X_1
\]
such that, for all $1 \leqslant i \leqslant e$, the closed immersion
$X_{i+1} \subset X_{i}$ is a residual inclusion; see
Definition~\ref{d:rflag}.  Unlike other flags, the scheme $X_i$ routinely
fails to be equidimensional.  Informally, a residual flag extends a partial
flag like a multiset extends a set: the degree of each hypersurface in a
residual flag is analogous to the multiplicity of each element in a multiset.
Proposition~\ref{p:rep} demonstrates that the parameter spaces
representing residual flags are projective bundles over partial flag
varieties.

Beyond the classification in Theorem~\ref{t:main}, our second major
contribution proves that a general point on the smooth Hilbert schemes
satisfying Conditions~2--7 corresponds to either a residual flag or the union
of a residual flag and a point.  For all integers $m$ greater than $2$, this
describes the closed subschemes parametrized by a smooth $\Hilb^p(\PP^{m})$.
In the first case, we deduce that these smooth Hilbert schemes are projective
bundles over partial flag varieties; see Theorem~\ref{t:resH}.  In
particular, every point on a Hilbert scheme satisfying Conditions~2--3
corresponds to a residual flag.  In the second case, the smooth Hilbert
schemes are birational to the product of $\PP^{m}$ and a projective bundle
over a partial flag variety; see Proposition~\ref{p:birat} and
Example~\ref{e:pts}.  In other words, we realize the smooth Hilbert schemes
$\Hilb^p(\PP^{m})$ as suitable generalizations of partial flag varieties.

The success in classifying these smooth Hilbert schemes suggests new questions
that may be tractable. What conditions on the integer partition $\lambda$
imply that $\Hilb^{p}(\PP^{m})$ is irreducible?  How does one extend this
result to Quot schemes or nested Hilbert schemes?  What is the right analogue
if $\PP^{m}$ is replaced with a smooth toric variety, a complete intersection,
or a Grassmannian?

%% ---------------------------------------------------------------------------
\subsection*{Strategy of proof}

We analyze $\Hilb^p(\PP^{m})$ via the induced action of the general linear
group.  A point in this Hilbert scheme is Borel-fixed if the stabilizer of the
corresponding closed subscheme contains all lower triangular matrices.  Every
nonempty $\Hilb^{p}(\PP^{m})$ has distinguished Borel-fixed point called the
lexicographic point. This point shows that rewriting the polynomial $p$ in
terms of the integer partition $\lambda$ is equivalent to $\Hilb^{p}(\PP^{m})$
being nonempty; see \cite{Mac} or \cite{Har}*{Corollary~5.7}.
Reeves--Stillman~\cite{RS}*{Theorems~1.4 and~4.1} establish that the
lexicographic point is always smooth and determine the dimension of the unique
irreducible component containing it.  Hence, proving smoothness reduces, in
principle, to computing the dimension of the tangent space at the other
Borel-fixed points.  Unfortunately, the rapid growth in the number of these
points and the complexity of individual points overwhelm a brute-force attack.

To circumvent these complications, we identify a new family of subschemes in
$\mathbb{P}^m$ that correspond to singular points on $\Hilb^{p}(\PP^{m})$.
Residual flags are used to describe these points and to prove that they are
singular.  By exploiting the geometry of residual flags, our analysis reduces
to the Hilbert schemes with an integer partition
$\lambda = \bigl( (m-1)^{d}, \ell^{c}, 1)$ where $m-2 \geqslant \ell \geqslant 1$;
see Proposition~\ref{p:sing}.  In this situation, we construct an explicit
monomial ideal and exhibit a number of linearly independent deformations; see
Lemma~\ref{l:sing}.  Since this number exceeds the dimension of the
lexicographic component, this ideal corresponds to a singular point on
$\Hilb^{p}(\PP^{m})$.

In hindsight, the complete classification of smooth Hilbert schemes is obtain
from just a few families of Borel-fixed points.  Only three singular families,
in addition to our new family, are required; see
Examples~\ref{e:gaps}$\,$--$\,$\ref{e:4pts}.  For smoothness, only one family
other than the lexicographic points is needed; see Theorem~\ref{t:resH} and
Proposition~\ref{p:birat}.  This reduction may be the most surprising
development.  The relevant Borel-fixed points also, fortuitously, avoid
technicalities arising in positive characteristic, thereby producing uniform
results over the integers.

Amusingly, all seven conditions in Theorem~\ref{t:main} correspond to Hilbert
schemes that are already known to be smooth.  Fogarty's
article~\cite{Fogarty}*{Theorems~1.4 and~2.4} shows that Conditions~1 and~6
guarantee smoothness.  Serving as our initial inspiration, Staal's
thesis~\cite{St1}*{Theorem~1.1} establishes that Conditions~2--3 correspond to
smooth Hilbert schemes.  Likewise, Ramkumar's
preprint~\cite{Ramkumar}*{Theorem~A} proves that Conditions~4--5 are
associated to smooth Hilbert schemes.  Under Condition~7, the Hilbert scheme
is just one point.  Despite recognizing each condition, the consolidated list
does not already appear in the literature and is not obviously complete.

%% ---------------------------------------------------------------------------
\subsection*{Computational experience}

Although independent of our proofs, calculations in \emph{Macaulay2}~\cite{M2}
were indispensable in the discovery of our results.  Recoding the Hilbert
polynomial as an integer partition gives a novel method of sampling nonempty
Hilbert schemes.  Using \emph{Macaulay2}, we made a systematic search of the
Borel-fixed points, for all $3 \leqslant m \leqslant 7$, exposing
Conditions~2--6.  We learn, a posteriori, that Conditions~2--5 imply that
Hilbert schemes have at most two Borel-fixed points.  Our computational
experiments suggest that the number of parts in the integer partition
$\lambda$ equal to $1$ governs the size of the intersection graph for the
irreducible components in the Hilbert scheme.

%%%%%%%%%%%%%%%%%%%%%%%%%%%%%%%%%%%%%%%%%%%%%%%%%%%%%%%%%%%%%%%%%%%%%%%%%%%%%%
\section{Residual flags}

In this section, we introduce the notion of a residual flag and show that the
scheme parametrizing these objects is smooth and projective. Throughout, we
work over a locally noetherian base scheme $S$ and $E$ denotes a coherent
$\mathcal{O}_S$-module.  We write
$\PP(E) \coloneqq \Proj \bigl( \Sym(E) \!\bigr)$ for the projectivization of
the graded symmetric algebra $\Sym(E)$.

\subsection*{Grassmannians}
For any $S$-scheme $T$, let $E_T$ denote the pull-back of $E$ to $T$.  A
surjection of coherent $T$-modules $E_T \to F$ gives a closed immersion
%$\PP(F) \subseteq \PP(E_T \!) = \PP(E) \times_{\!S} T$.
\[
  \PP(F) \subseteq \PP(E_T \!) = \PP(E) \times_{\!S} T\, .
\]
If $F$ is locally free of constant rank $n + 1$, then the subscheme $\PP(F)$
is a $n$-plane in $\PP(E_T \!)$.  The set of $T$-valued points of the
functor $\Gr{n}{E}$ is the set of the $n$-planes in $\PP(E_T \!)$.  The
$S$-scheme representing this functor is projective; see
\cite{EGA}*{Proposition~9.8.4}.  When $E$ is locally free of constant rank
$m + 1$, the map $\Gr{n}{E} \to S$ is smooth of relative dimension
$(n + 1)(m - n)$.

\subsection*{Flag varieties}
Consider an $e$-tuple $n \coloneqq (n_1, n_2, \dotsc, n_e)$ of integers such
that $n_1 > n_2 > \dotsb > n_{e} \geqslant 0$.  For any $S$-scheme $T$, a flag
of type $n$ in $\PP(E_T \!)$ is a chain of closed immersions
\[
  \PP(F_e) \subset \PP(F_{e-1}) \subset \dotsb \subset \PP(F_1)
\]
where each $\PP(F_i)$ is a $n_i$-plane in $\PP(E_T\!)$ for all
$1 \leqslant i \leqslant e$.  The set of $T$-valued points of the functor
$\Fl{n}{E}$ is the set of flags of type $n$ in $\PP(E_T \!)$.  The
$S$-scheme representing this functor is projective; see
\cite{EGA}*{Proposition~9.9.3}.  A flag is a succession of Grassmannians.
Hence, when $E$ is locally free of constant rank $n_0 + 1$, it follows that
the map $\Fl{n}{E} \to S $ is smooth of relative dimension
$\sum_{i=1}^{e} (n_i +1)(n_{i-1} - n_{i})$.

\subsection*{Relative divisors}
A closed subscheme $D \subset \PP(E)$ is a relative effective Cartier divisor
if it is flat over $S$ and its ideal sheaf is invertible.  The divisor $D$ has
degree $d$ if, for each geometric point $\Spec(k) \to S$, the fibre
$D \times_{\!S} \Spec(k)$ is a hypersurface of degree $d$ in
$\PP^{m} \cong \PP(E) \times_{\!S} \Spec(k)$; compare with
\cite{KM}*{Corollary~1.1.5.2}.  Using the dual sheaf
$E^* \coloneqq \mathcal{H}\!\mathit{om}(E, \mathcal{O}_{S})$, we may
parametrize these divisors in $\PP(E)$; see \cite{Fogarty}*{Proposition~1.2}
and \cite{Kollar}*{Exercise~1.4.1.4}.

\begin{lemma}
  \label{l:Cart}
  Assume that $E$ is a locally free sheaf and let $E^*$ be its dual.  For all
  nonnegative integers $d$, the $S$-scheme $\PP \bigl( \Sym^d(E^*) \!\bigr)$
  represents the functor of relative effective Cartier divisors in $\PP(E)$
  having degree $d$.
\end{lemma}

\begin{proof}
  Let $T$ be an $S$-scheme and set
  $F_T \coloneqq \smash{\bigl( \Sym^{d}(E^*) \! \bigr)}_T = \Sym^d \bigl(
  (E^*)_T \bigr)$.  Given a line bundle $L$ on $T$ and a surjection
  $F_T \to L$, we see that $L^*$ is an invertible subsheaf of $F_T^*$.  The
  ideal sheaf generated by $L^*$ in $\Sym(E^*)_T$ is invertible and determines
  a hypersurface of degree $d$ fiberwise in $\PP(E_T\!)$.  Thus, it defines a
  relative effective Cartier divisor in $\PP(E_T\!)$ of degree $d$.
\end{proof}

\subsection*{Residual scheme}
Consider the closed immersion $D \subseteq X$ in $\PP(E)$ where $D$ is a
relative effective Cartier divisor.  Let $\mathcal{I}_D$ and $\mathcal{I}_X$
denote the ideal sheaves of the closed subschemes $D$ and $X$ in $\PP(E)$. The
residual scheme to $D$ in $X$ is the closed subscheme $Y$ in $\PP(E)$ defined
by the colon ideal sheaf
$\mathcal{I}_{Y} \coloneqq (\mathcal{I}_X \mathbin{:} \mathcal{I}_D) =
\mathcal{I}_X \cdot {\mathcal{I}_{D}}^{\!\!\smash{-1}}$. It follows that
$\mathcal{I}_X = \mathcal{I}_Y \cdot \mathcal{I}_D$ and $X$ is the union of
the subschemes $D$ and $Y$; see \cite{Fulton}*{Definition~9.2.1} and
\cite{Fogarty}*{pp.~512--513}.

\begin{definition}
  For any positive integer $d$, a closed immersion $Y \subset X$ in $\PP(E)$
  is a \emph{$d$-residual inclusion} if there exists a relative effective
  Cartier divisor $D$ in $\PP(E)$ of degree $d$ such that the closed subscheme
  $Y$ is the residual scheme to $D$ in $X$ with respect to $\PP(E)$.
\end{definition}

\begin{lemma}
  \label{l:flat}
  Let $d$ be a positive integer and let $Y \subset X$ in $\PP(E)$ be a
  $d$-residual inclusion. The map $Y \to S$ is flat if and only if the map
  $X \to S$ is flat.
\end{lemma}

\begin{proof}
  The existence of a relative effective Cartier divisor $D$ in $\PP(E)$ such
  that $Y$ is the residual scheme to $D$ in $X$ yields the short exact
  sequence 
  \[
    \begin{tikzcd}
      0 \arrow[r] & \mathcal{O}_Y(-D) \arrow[r] & \mathcal{O}_X \arrow[r] &
      \mathcal{O}_D \arrow[r] & 0 \, ,
    \end{tikzcd}
  \]
  where multiplication by a local equation for $D$ defines the injective map.
  By hypothesis, the sheaf $\mathcal{O}_D$ is flat over $S$.  We deduce that
  $\mathcal{O}_Y$ and $\mathcal{O}_Y(-D)$ are flat over $S$ if and only if
  $\mathcal{O}_X$ is flat over $S$.
\end{proof}

\begin{definition}
  \label{d:rflag} 
  Let
  $(n, d) \coloneqq (n_1, d_1), (n_2, d_2), \dotsc, (n_{e}, d_{e})$ be
  a sequence of pairs of positive integers such that
  $n_1 > n_2 > \dotsb > n_e > 0$.  For any $S$-scheme $T$, a
  \emph{residual flag} of type $(n, d)$ in $\PP(E_T\!)$ is a chain of closed
  immersions
  \[
    \varnothing = X_{e+1} \subset X_{e} \subset X_{e-1} \subset
    \dotsb \subset X_1 
  \]
  in $\PP(E_T\!)$ such that, for all $1 \leqslant i \leqslant e$, the
  following properties are satisfied:
  \begin{compactenum}[(i)]
  \item the scheme $X_i$ is flat over $T$,
  \item the scheme $X_i$ is contained in some $n_i$-plane
    $\PP(F_i) \subseteq \PP(E_T \!)$, and
  \item the closed immersion $X_{i+1} \subset X_{i}$ is a $d_i$-residual
    inclusion in $\PP(F_{i})$.
  \end{compactenum}
\end{definition}

\begin{remark}
  \label{r:bot}
  For any residual flag, the third property for $i = e$ asserts that closed
  immersion $\varnothing = X_{e+1} \subset X_{e}$ is a $d_{e}$-residual
  inclusion.  In other words, the closed subscheme $X_{e}$ is a relative
  effective Cartier divisor of degree $d_{e}$ in some $n_{e}$-plane
  $\PP(F_{e}) \subseteq \PP(E_T \!)$.
\end{remark}

\begin{remark}
  \label{r:d=1}
  Residual flags generalize flags of linear subspaces. To be more explicit,
  assume $E$ is locally free of constant rank $m + 1$ and let $T$ be an
  $S$-scheme.  Given a flag
  $\Lambda_{e} \subset \Lambda_{e-1} \subset \dotsb \subset \Lambda_1$ of type
  $n$ in $\PP(E_T \!)$ where $m > n_1$, there exists a flag
  $\PP(F_{e}) \subset \PP(F_{e-1}) \subset \dotsb \subset \PP(F_1)$ of type
  $(n_1+1, n_2+1, \dotsc, n_{e}+1)$ in $\PP(E_T \!)$ such that the $n_i$-plane
  $\Lambda_i$ is a hyperplane in $\PP(F_i)$ for all
  $1 \leqslant i \leqslant e$.  Setting $X_{e+1} \coloneqq \varnothing$, we
  define $X_i \coloneqq \Lambda_i \cup X_{i+1}$ by a descending induction.  It
  follows that $X_{i+1} \subset X_i$ is a $1$-residual inclusion for all
  $1 \leqslant i \leqslant e$. Thus, the chain of closed immersions
  $\varnothing \subset X_{e} \subset X_{e-1} \subset \dotsb \subset X_{1}$ is
  a residual flag of type $(n_1+1, 1), (n_2+2, 1), \dotsc, (n_e +1, 1)$ in
  $\PP(E_T \!)$.
\end{remark}

\begin{example}
  We illustrate how to recursively construct the defining ideal for the closed
  subschemes in a residual flag.  Let $(n, d) \coloneqq (3,2), (2,4)$ and
  set $S = \Spec(k)$ where $k$ is a field.  The residual flags of type
  $(n, d)$ in $\PP^3 \coloneqq \Proj(k[x_0, x_1, x_2, x_3]) $ are nested
  pairs $X_2 \subset X_1$ in $\PP^3$ such that $X_2$ is a planar curve of
  degree $4$ and $X_2$ is a $2$-residual scheme in $X_1$.  The defining ideal
  of $X_2$ has the form %$I_{X_2} \coloneqq \langle f_1, f_2 \rangle$
  \[
    I_{X_2} \coloneqq \langle f_1, f_2 \rangle
  \]
  where $f_1$ is a homogeneous polynomial in $k[x_0,x_1,x_2,x_3]$ of degree
  $1$ and $f_2$ is a homogeneous polynomial in $k[x_0,x_1,x_2,x_3]$ of degree
  $d_2 = 4$ that is not divisible by $f_1$.  The $2$-plane $\PP(F_2)$
  containing $X_2$ is given by the vanishing of the linear form $f_1$. The
  defining ideal of the closed subscheme $X_1$ in $\PP^3$ has the form
  %$I_{X_1} \coloneqq g \cdot I_{X_2} = \langle g f_1, g f_2 \rangle$
  \[
    I_{X_1} \coloneqq g \cdot I_{X_2} = \langle g f_1, g f_2 \rangle
  \]
  where $g \in k[x_0,x_1,x_2,x_3]$ is a homogeneous polynomial of degree
  $d_1 = 2$.  Geometrically,
  the scheme $X_1$ is the union of the quadratic hypersurface defined by the
  vanishing of $g$ and the planar quartic curve $X_2$.  For the special
  configuration in which $g = f_1^2$, the defining ideal of the closed
  subscheme $X_1$ is
  $I_{X_1} = \langle f_1^3, f_1^2 f_2^{} \rangle = \langle f_1^2 \rangle \cap
  \langle f_1^3, f_2 \rangle$.
\end{example}

\subsection*{Functor of residual flags}
The pullback of a residual scheme is again a residual scheme; see
\cite{Fogarty}*{Lemma~1.3}.  It follows that residual flags define a
contravariant functor from the category of $S$-schemes to the category of
sets.  Let $(n, d) \coloneqq (n_1, d_1), (n_2, d_2), \dotsc, (n_{e}, d_{e})$
be a sequence of pairs of positive integers such that
$n_1 > n_2 > \dotsb > n_{e}$ and let $E$ be coherent $\mathcal{O}_{S}$-module.
For any $S$-scheme $T$, the set of $T$-valued points of the functor
$\Res{n}{d}$ is defined to be the set of residual flags of type $(n, d)$ in
$\PP(E_T\!)$.

\begin{lemma}
  \label{l:1pair}
  Assume that $(n_1,d_1)$ is a pair of positive integers and $E$ is a
  coherent sheaf on $S$. Let $F$ be the universal quotient sheaf on the
  Grassmannian $\Gr{n_1}{E}$ and let $F^{*}$ denote its dual.
  \begin{compactenum}[\upshape (i)]
  \item When $d_1 > 1$, the $S$-scheme $\PP \bigl( \Sym^{d_1}(F^{*}) \!\bigr)$
    represents the functor of residual flags of type $(n_1, d_1)$ in
    $\PP(E)$.  The structure map of this $S$-scheme is the composition of the
    canonical maps $\PP \bigl( \Sym^{d_1}(F^{*}) \!\bigr) \to \Gr{n_1}{E}$
    and $\Gr{n_1}{E} \to S$.
  \item When $d_1 = 1$, the Grassmannian $\Gr{n_1-1}{E}$ represents the
    functor of residual flags of type $(n_1, 1)$ in $\PP(E)$.
  \end{compactenum}
\end{lemma}

\begin{proof}
  Let $T$ be an $S$-scheme. Remark~\ref{r:bot} shows that residual flags of
  type $(n_1, 1)$ in $\PP(E_T\!)$ are $(n-1)$-planes, so part~(ii) follows.
  immediately.  Assume that $d_1 > 1$.  A $T$\nobreakdash-valued point of
  $\PP\bigl( \Sym^{d_1}(F^{*}) \bigr)$ consists of a line bundle $L_T$ on $T$
  and a surjection $\Sym^{d_1} \bigl( \! (F_T \!)^* \!\bigr) \to L_T$ together
  with a $T$\nobreakdash-valued point of $\Gr{n_1}{E}$.  Since
  $\smash{\bigl( \Sym^{d_1}(F^{*}) \!\bigr)}_T = \Sym^{d_1} \bigl( \! (F_T
  \!)^* \bigr)$, Lemma~\ref{l:Cart} demonstrates that $L_T$ corresponds to a
  relative effective Cartier divisors of degree $d_1$ in the $n_1$-plane
  $\PP(F_T\!)$.  The $T$\nobreakdash-valued point of $\Gr{n_1}{E}$ corresponds
  to the $n_1$-plane $\PP(F_T\!) \subseteq \PP(E_T\!)$.  Thus, the
  $S$\nobreakdash-scheme $\PP \bigl( \Sym^{d_1}(F^{*}) \!\bigr)$ represents
  the residual flags of type $(n_1, d_1)$ in $\PP(E)$.
\end{proof}

\begin{remark}
  When the base scheme $S$ is the spectrum of a field $k$ and $d_1 > 1$, the
  parameter space for the residual flags of type $(n_1, d_1)$ in
  $\PP^{m} \coloneqq \Proj(k[x_0, x_1, \dotsc, x_{m}])$ is the variety of
  degree $d_1$ hypersurfaces in $n_1$-planes in $\PP^{m}$; see
  \cite{Fulton}*{Example~14.7.12}.
\end{remark}

\subsection*{Latent planes}
The definition of a residual flag
$\varnothing \subset X_{e} \subset X_{e -1} \subset \dotsb \subset X_{1}$ of
type $(n, d)$ in $\PP(E_T\!)$ includes the existence of flag of linear
subspaces.  For all $1 \leqslant i \leqslant e$, the scheme $X_i$ lies in some
$n_i$-plane $\PP(F_i) \subseteq \PP(E_T\!)$.  When $d_{e} > 1$, we refer to
the set $\{ \PP(F_i) \mid 1 \leqslant i \leqslant e \}$ as the latent planes
of the residual flag.  In the special case $d_{e} = 1$, the scheme $X_{e}$ is
itself a $(n_{e} - 1)$-plane and the latent planes are
$\{ X_{e} \} \cup \{ \PP(F_i) \mid 1 \leqslant i \leqslant e - 1 \}$.

\begin{lemma}
  \label{l:late}
  Let $(n, d)$ be the type of a residual flag.
  \begin{compactenum}[\upshape (i)]
  \item When $d_{e} > 1$, there exists a morphism
    $\Res{n}{d} \to \Fl{n}{E}$ sending a residual flag to its flag
    of latent planes.
  \item When $d_{e} = 1$, there exists a morphism
    $\Res{n}{d} \to \Fl{n^{\circ}}{E}$ sending a residual flag to its flag
    of latent planes where
    $n^{\circ} \coloneqq (n_1, n_2, \dotsc, n_{e-1}, n_{e}-1)$.
  \end{compactenum}
\end{lemma}

\begin{proof}
  We need to show that the latent planes are unique and form a flag.  Let $T$
  be an $S$-scheme and let
  $\varnothing \subset X_{e} \subset X_{e -1} \subset \dotsb \subset X_1$ be a
  residual flag of type $(n, d)$ in $\PP(E_T\!)$.  Suppose that, for some
  $1 \leqslant i \leqslant e$, the scheme $X_i$ is contained in two distinct
  $n_i$-planes $\PP(F_i)$ and $\PP({F_i}')$.  The closed immersion
  $X_{i+1} \subset X_i$ is a $d_i$-residual inclusion in $\PP(F_i)$, so there
  exists a relative effective Cartier divisor $D_i$ in $\PP(F_i)$ such that
  $X_i$ is the union of $D_i$ and $X_{i+1}$.  Since the codimension of $X_i$
  in $\PP(F_i)$ equals $1$, we deduce that
  $X_i = D_i = \PP(F_i) \cap \PP({F_i}')$.  It follows that either the
  $n_i$-plane containing $X_i$ is unique or $i = 1$, $D_1 = X_1$, and
  $d_1 = 1$.  Thus, each scheme $X_i$ is contained in a unique plane having
  the dimension of its corresponding latent plane, so both assertions follow.
\end{proof}

\subsection*{Representability} 
The pivotal result in this section shows that the functor of residual
flags is representable.  Moreover, it realizes this parameter space as a
generalization of a partial flag variety.

\begin{proposition}
  \label{p:rep}
  Assume that
  $(n, d) \coloneqq (n_1, d_1), (n_2, d_2), \dotsc, (n_e, d_e)$ is the
  type of a residual flag and $E$ is a coherent sheaf on $S$. For all
  $1 \leqslant i \leqslant e$, let $F_i^{*}$ denote the dual of the universal
  quotient sheaf on the Grassmannian $\Gr{n_i}{E}$.
  \begin{compactenum}[\upshape (i)]
  \item When $d_{e} > 1$, we have the Cartesian square%
    {%\small
      \[
        \begin{tikzcd}[row sep = 12pt, column sep = 20pt]
          \Res{n}{d} \arrow[r] \arrow[d]
          & \PP \bigl( \Sym^{d_1}(F_1^{*}) \! \bigr) \times_{\!S}
          \PP \bigl( \Sym^{d_2}(F_2^{*}) \!\bigr) \times_{\!S} \dotsb
          \times_{\!S}
          \PP \bigl( \Sym^{d_{e}}(F_{e}^{*}) \! \bigr) \arrow[d] \\
          \Fl{n}{E} \arrow[r] &
          \Gr{n_1}{E} \times_{\!S}
          \Gr{n_2}{E} \times_{\!S} \dotsb \times_{\!S}
          \Gr{n_{e}}{E} \, .
        \end{tikzcd}
      \]}%
  \item When $d_{e} = 1$, setting
    $n^{\circ} \coloneqq (n_1, n_2, \dotsc, n_{e-1}, n_{e}-1)$ gives
    the Cartesian square%
    {%\small
      \[
        \begin{tikzcd}[row sep = 12pt, column sep = 10pt]
          \!\!\!\! \Res{n}{d} \!\!\! \arrow[r] \arrow[d] & \!\!
          \PP \bigl( \Sym^{d_1}(F_1^{*}) \! \bigr) \!\times_{\!S}
          \PP \bigl( \Sym^{d_2}(F_2^{*}) \! \bigr) \!\times_{\!S}
          \dotsb \!\times_{\!S}
          \PP \bigl( \Sym^{d_{e-1}}(F_{e-1}^{*}) \! \bigr)
          \!\times_{\!S} \Gr{n_{e}-1}{E} \arrow[d] \hspace{-12pt} \\
          \!\!\!\! \Fl{n^{\circ}}{E} \arrow[r] &
          \Gr{n_1}{E} \times_{\!S}
          \Gr{n_2}{E} \times_{\!S} \dotsb \times_{\!S}
          \Gr{n_{e - 1}}{E} \times_{\!S}
          \Gr{n_{e} - 1}{E} \, . \hspace{-12pt}
        \end{tikzcd}
      \]}%
  \end{compactenum}
  In both cases, the functor $\Res{n}{d}$ is represented by a projective
  $S$-scheme.
\end{proposition}

\begin{proof}
  The bottom arrows in the diagrams are closed immersions; see
  \cite{EGA}*{Proposition~9.9.3}.  To prove the projectivity of
  $\Res{n}{d}$, it is enough to show that these squares are Cartesian.

  Assume that $d_{e} > 1$. Let $T$ be an $S$-scheme and let
  $\varnothing \subset X_{e} \subset X_{e-1} \subset \dotsb \subset X_1$ be a
  residual flag of type $(n, d)$ in $\PP(E_T\!)$.  For all
  $1 \leqslant i \leqslant e$, there exists a relative effective Cartier
  divisor $D_i$ in $\PP(F_i)$ such that $X_i = D_i \cup X_{i+1}$.
  Lemma~\ref{l:1pair} implies that the product of projective bundles
  \[
    P \coloneqq
    \PP \bigl( \Sym^{d_1}(F_1^{*}) \! \bigr) \times_{\!S}
    \PP \bigl( \Sym^{d_2}(F_2^{*}) \! \bigr) \times_{\!S} \dotsb \times_{\!S}
    \PP \bigl( \Sym^{d_{e}}(F_{e}^{*}) \! \bigr)
  \]
  over
  $G \coloneqq \Gr{n_1}{E} \times_{\!S} \Gr{n_2}{E} \times_{\!S} \dotsb
  \times_{\!S} \Gr{n_{e}}{E}$ parametrizes $e$-tuples of relative effective
  Cartier divisors of degree $d_i$ contained in some $n_i$-plane in $\PP(E)$
  for all $1 \leqslant i \leqslant e$.  Hence, we have a morphism from
  $\Res{n}{d}$ to $P$ sending the residual flag
  $\varnothing \subset X_{e} \subset X_{e-1} \subset \dotsb \subset X_1$ to
  the $e$-tuple
  $\bigl( D_1 \!\subset \PP(F_1), D_2 \!\subset \PP(F_2), \dotsc, D_{e}
  \!\subset \PP(F_{e}) \! \bigr)$.  Combined with the morphism in
  Lemma~\ref{l:late}, we obtain a morphism from $\Res{n}{d}$ to the
  fibre product $\Fl{n}{E} \times_{\!G} P$.

  We want to exhibit the inverse of this morphism. A $T$-valued point of this
  fibre product is a flag
  $\PP(F_{e}) \subset \PP(F_{e-1}) \subset \dotsb \subset \PP(F_1)$ of type
  $n$ in $\PP(E_T\!)$ together with an $e$-tuple of relative effective Cartier
  divisors $D_i$ in $\PP(F_i)$ for all $1 \leqslant i \leqslant e$.  Setting
  $X_{e+1} \coloneqq \varnothing$, we define $X_i \coloneqq D_i \cup X_{i+1}$
  for all $1 \leqslant i \leqslant e$ by descending induction.  By
  construction, we have a chain
  $\varnothing \subset X_{e} \subset X_{e-1} \subset \dotsb \subset X_1$ of
  closed immersions in $\PP(E_T\!)$.  Since Lemma~\ref{l:flat} shows that, for
  all $1 \leqslant i \leqslant e$, the scheme $X_i$ is flat over $T$, this
  chain is a residual flag of type $(n, d)$ in $\PP(E_T\!)$.  As the pullback
  of a residual scheme is again a residual scheme, this construction is also
  functorial.  We conclude that the scheme $\Res{n}{d}$ and the fibre product
  $\Fl{n}{E} \times_{\!G} P$ are isomorphic.

  The proof for the case $d_{e} = 1$ is very similar.  Since the scheme
  $X_{e}$ is a $(n_{e}-1)$-plane in $\PP(E_T\!)$, we have a natural morphism
  from $\Res{n}{d}$ to the product
  \[
    \PP \bigl( \Sym^{d_1}(F_1^{*}) \! \bigr) \times_{\!S} \PP \bigl(
    \Sym^{d_2}(F_2^{*}) \! \bigr) \times_{\!S} \dotsb \times_{\!S} \PP \bigl(
    \Sym^{d_{e-1}}(F_{e-1}^{*}) \! \bigr) \times_{\!S} \Gr{n_{e}}{E}
  \]
  sending the residual flag to the tuple
  $\bigl( D_1 \subset \PP(F_1), D_2 \subset \PP(F_2), \dotsc, D_{e-1} \subset
  \PP(F_{e-1}), X_{e} \subset \PP(E_T\!) \! \bigr)$.  Using
  Lemma~\ref{l:late}, we obtain a morphism from $\Res{n}{d}$ to the
  appropriate fibre product.  As above, one exhibits the inverse morphism by
  defining $X_i \coloneqq D_i \cup X_{i+1}$ for all
  $1 \leqslant i \leqslant e-1$.
\end{proof}

\begin{corollary}
  \label{c:dim}
  Let $(n, d) \coloneqq (n_1, d_1), (n_2, d_2), \dotsc, (n_e, d_e)$ be
  the type of a residual flag and let $E$ be a locally free sheaf on $S$ of
  constant rank $n_0 + 1$ where $n_0 \geqslant n_1$.
  \begin{compactenum}[\upshape (i)]
  \item When $d_{e} > 1$, the structure map $\Res{n}{d} \to S$ is smooth
    of relative dimension
    \[
      \sum_{i=1}^{e} \left[
        \binom{n_i + d_i}{d_i} - 1 +
        (\! n_i + 1)(\! n_{i-1} - n_i)
      \right] \, .
    \]
  \item When $d_{e} = 1$, the structure map $\Res{n}{d} \to S$ is smooth
    of relative dimension
    \[
      % n_{e}(\! n_{e-1} - n_{e}+1)
      % + \sum_{i=1}^{e-1} \left[
      %   \binom{n_i + d_i}{d_i} -1
      %   + (\! n_i + 1)(\! n_{i-1} - n_i)
      % \right] \, .
      % % \\[-5pt]
      - (\! n_{e-1} - n_{e}) + \sum_{i=1}^{e}
      \left[
        \binom{n_i + d_i}{d_i} - 1 
        + (\! n_i + 1)(\! n_{i-1} - n_i)
      \right] \, .
    \]
  \end{compactenum}
\end{corollary}

\begin{proof}
  For all $1 \leqslant i \leqslant e$, the sheaf $\Sym^{d_i}(F_i^{*})$ on the
  Grassmannian $\Gr{n_i}{E}$ is locally free of constant rank
  $\smash{\binom{n_i+d_i}{d_i}}$.  Using the relative dimensions of flag
  varieties and Grassmannians, Proposition~\ref{p:rep} shows that the map
  $\Res{n}{d} \to S$ is smooth of the claimed relative dimensions.
\end{proof}

\begin{question}
  \label{q:chow}
  When $E$ is locally free, the fibre product interpretation leads to a
  presentation for the Chow ring of $\Res{n}{d}$. Specifically, one
  combines the formula for the Chow ring of a projective bundle with the
  description of the Chow ring of a partial flag variety; see
  \cite{Fulton}*{Examples~8.3.4 and 14.7.16}.  Moreover, the cycle map on
  $\Res{n}{d}$ from its Chow ring to its integral cohomology ring is an
  isomorphism; see \cite{Fulton}*{Example~19.1.11}.  Which aspects of Schubert
  calculus on partial flag varieties extend to the parameter space of residual
  flags?
\end{question}

\begin{question}
  For line bundles on $\PP(E)$, the higher-direct images under the structure
  map to the base scheme $S$ are well-understood.  Similarly, the
  Borel--Weil--Bott theorem describes the higher-direct images of line bundles
  on a flag variety under the structure map to the base scheme.  What is the
  common refinement for line bundles on the parameter space $\Res{n}{d}$?
\end{question}

%%%%%%%%%%%%%%%%%%%%%%%%%%%%%%%%%%%%%%%%%%%%%%%%%%%%%%%%%%%%%%%%%%%%%%%%%%%%%%
\section{Hilbert polynomials and residual flags}

Using the geometry of residual flags, we explain the combinatorial formula for
the Hilbert polynomials. We show that the type of a residual flag encodes the
Hilbert polynomial of its largest subscheme.  We also prove that every
lexicographic ideal (other than the zero ideal and unit ideal) determines a
residual flag.  Let $R \coloneqq k[x_0, x_1, \dotsc, x_{m}]$ denote the
standard graded polynomial ring over a field $k$ and set
$\PP^{m} \coloneqq \Proj(R)$.

\subsection*{Integer partitions}
We repackage the type of a residual flag as a single integer partition.  Given
a sequence $(n_1, d_1), (n_2, d_2), \dotsc, (n_{e}, d_{e})$ of pairs of
positive integers such that $n_1 > n_2 > \dotsb > n_e > 0$, the integer
partition $\lambda \coloneqq (\lambda_1, \lambda_2, \dotsc, \lambda_r)$
satisfies
$\lambda_1 \geqslant \lambda_2 \geqslant \dotsb \geqslant \lambda_r \geqslant
1$ and
\[
  \lambda = (
  \underbrace{n_1, n_1, \dotsc, n_1}_{\text{$d_1$-times}},
  \underbrace{n_2, n_2, \dotsc, n_2}_{\text{$d_2$-times}},
  \dotsc,
  \underbrace{n_e, n_e, \dotsc, n_e}_{\text{$d_e$-times}}
  )
  \, .
  %\vspace{-3pt}
\]
The length of $\lambda$ is $r \coloneqq d_1 + d_2 + \dotsb + d_e$.  It can be
convenient to use a notation for integer partitions that indicates the number
of times each integer occurs as a part; see \cite{Mac15}*{Subsection~1.1}.
The expression $\lambda = ( \dotsc, i^{a_i}, \dotsc, 2^{a_2}, 1^{a_1} )$ means
that, for all positive integers $i$, exactly $a_i$ of the parts in $\lambda$
are equal to $i$.  For instance, we have
$\lambda = (\! n_1^{\,\smash{d_1}}, n_2^{\,\smash{d_2}}, \dotsc,
n_{e}^{\,\smash{d_{e}}})$.

To describe Hilbert polynomials, we treat a binomial coefficient with a
variable in its numerator as a polynomial. Specifically, for all integers $c$,
we set
\[
  \binom{t}{c} \coloneqq
  \begin{cases}
    \tfrac{1}{c!} (t)(t-1) \dotsb (t-c+1) & \text{if $c \geqslant 0$} \\
    0 & \text{if $c < 0$.  }
  \end{cases}
\]
The polynomial $\binom{t}{c}$ has rational coefficients and degree $c$.
  
\begin{lemma}
  \label{l:poly}
  Let $(n, d)$ be the type of a residual flag and let
  $\lambda \coloneqq (\lambda_1, \lambda_2, \dotsc, \lambda_r)$ be its
  associated integer partition.  For any residual flag
  $\varnothing \subset X_{e} \subset X_{e-1} \subset \dotsb \subset X_1$ of
  type $(n,d)$ in $\PP^{m}$, the Hilbert polynomial of the closed scheme
  $X_1$ in $\PP^{m}$ is
  \[
    p(t) = \sum_{i=1}^{r} \binom{t + \lambda_i - i}{\lambda_i - 1} \, .
  \]
\end{lemma}

\begin{proof}
  We proceed by induction on $e$.  The closed immersion $X_{2} \subset X_{1}$
  is a $d_{1}$-residual inclusion in some $n_1$-plane contained in
  $\PP^{m}$.  Hence, there exists a relative effective Cartier divisor
  $D_{1}$ in this $n_1$-plane and a short exact sequence of sheaves
  \begin{equation}
    \label{f:ses}
    \begin{tikzcd}
      0 \arrow[r]
      & \mathcal{O}_{X_{2}}(-D_{1}) \arrow[r]
      & \mathcal{O}_{X_1} \arrow[r] &
      \mathcal{O}_{D_1} \arrow[r] & 0 \, .
    \end{tikzcd}
  \end{equation}
  When $e = 1$, we have $X_{2} = \varnothing$. The closed scheme $X_{1}$ is
  the divisor $D_{1}$, so its Hilbert polynomial is
  \begin{align*}
    p(t)
    &= \binom{t + n_{1}}{n_{1}}
      - \binom{t + n_{1} - d_{1}}{n_{1}} %\\
    = \sum_{i=1}^{d_{1}} \left[
      \binom{t + n_{1}-i+1}{n_{1}}
      - \binom{t + n_{1} - i}{n_{1}}
      \right]
      = \sum_{i=1}^{d_{1}} \binom{t + n_1 - i}{n_1 - 1} \, . 
  \end{align*}
  The integer partition associated to the residual flag
  $\varnothing \!\subset X_{1}$ is $\lambda = (n_{1}^{\smash{d_{1}}})$, so
  the base case holds.

  Suppose that $e > 1$. The residual flag
  $\varnothing \subset X_{e} \subset X_{e-1} \subset \dotsb \subset X_{2}$ has
  $e-1$ subschemes and its integer partition is
  $(n_{2}^{d_{2}}, n_{3}^{d_{3}}, \dotsc,
  n_{e\vphantom{2}}^{d_{e\vphantom{2}}})$.  The induction hypothesis implies
  that the Hilbert polynomial of the closed scheme $X_{2}$ is
  $\sum_{i=d_1+1}^{r} \binom{t+\lambda_i - i + d_{1}}{\lambda_i -1}$.  From
  the short exact sequence \eqref{f:ses}, we deduce that the Hilbert
  polynomial of the closed scheme $X_{1}$ is
  \begin{align*}
    p(t)
    &= \sum_{i = d_1+1}^{r}
      \binom{t + \lambda_i - i + d_1}{\lambda_i -1}
      + \sum_{i=1}^{d_{1}} \binom{t + n_j - i}{n_j - 1}
      = \sum_{i=1}^{r}
      \binom{t + \lambda_i - i}{\lambda_i -1} \, . \qedhere
  \end{align*}
\end{proof}

\begin{remark}
  Let $X$ be a closed subscheme in $\PP^{m}$ having Hilbert polynomial
  $\smash{p(t) = \sum_{i=1}^{r} \binom{t + \lambda_i - i}{\lambda_i -1}}$. The
  dimension of $X$ is $\lambda_1 -1$ and the degree of $X$ is the number $d_1$
  of parts in $\lambda$ equal to $\lambda_1$.  One verifies that the
  arithmetic genus of $X$ is
  \[
    (-1)^{\lambda_1 -1} \sum_{i=2}^{r} \binom{\lambda_i -i}{\lambda_i -1} =
    \sum_{i=2}^{r} (-1)^{\lambda_1 - \lambda_i} \binom{i-2}{\lambda_i -1} \, .
  \]
  Remarkably, the Gotzmann Regularity Theorem shows that the length $r$ of
  $\lambda$ is an upper bound on the Castelnuovo-Mumford regularity of the
  saturated ideal defining the closed subscheme $X$ in $\PP^{m}$; see
  \cite{Got}*{Lemma~2.9} or \cite{BH}*{Theorem~4.3.2}
\end{remark}

\subsection*{Lexicographic ideals}
We identify a special residual flag by recognizing the geometric properties of
a distinguished monomial ideal.  The lexicographic order on the monomials in
$R = k[x_0,x_1, \dotsc, x_{m}]$ is defined by declaring
$x_0^{\,\smash{b_0}} \, x_1^{\,\smash{b_1}} \, \dotsb \, x_{m}^{\,\smash{b_m}}
> x_0^{\,\smash{c_0}} \, x_1^{\,\smash{c_1}} \, \dotsb \,
x_{m}^{\,\smash{c_m}}$ whenever the first nonzero entry in the integer
sequence $(b_0 - c_0, b_1 - c_1, \dotsc, b_{m} - c_{m})$ is positive.  A
lexicographic ideal $I$ is a monomial ideal in $R$ such that, for all integers
$j$, the homogeneous component $I_{\!j}$ is the $k$-vector space spanned by
the largest $\dim_{k} I_{\!j}$ monomials in lexicographic order.

As the cornerstone of our approach, we recount a variant of the Macaulay
characterization~\cite{Mac} of the Hilbert functions for homogeneous ideals in
polynomial ring $R$.  Recall that the Hilbert function
$h_{R/I} \colon \mathbb{Z} \to \mathbb{N}$ of a homogeneous ideal $I$ in $R$
is defined, for all integers $j$, by $h_{R/I}(j) \coloneqq \dim_{k} (R/I)_j$.

\begin{lemma}
  \label{l:nonempty}
  Let $p$ be a numerical polynomial having degree less than $m$. The
  following statements are equivalent.
  \begin{compactenum}[\upshape (a)]
  \item There exists a closed subscheme $X$ in $\PP^{m}$ whose Hilbert
    polynomial is $p$.
  \item There exists an integer partition
    $\lambda \coloneqq (\lambda_1, \lambda_2, \dotsc, \lambda_r)$ such that
    $p(t) = \smash[t]{\sum\nolimits_{i=1}^{r}
        \binom{t + \lambda_i - i}{\lambda_i - 1}}$.
    % \[
    %   p(t) = \sum_{i=1}^{r} \binom{t + \lambda_i - i}{\lambda_i - 1} \, .
    % \]
  \item There exists an integer partition
    $\lambda \coloneqq (\lambda_1, \lambda_2, \dotsc, \lambda_r)$ such that
    $p(t) = \smash[b]{\sum\nolimits_{i=1}^{r}
      \binom{t + \lambda_i - i}{\lambda_i - 1}}$
    % \[
    %   p(t) = \sum_{i=1}^{r} \binom{t + \lambda_i - i}{\lambda_i - 1} \, ,
    % \]
    and a \newline lexicographic ideal $L(\lambda)$ in $R$ such that
    $h_{R/L(\lambda)}(j) = \smash{\sum\nolimits_{i=1}^{r}} \binom{j+\lambda_i
      -i}{j-i+1}$ for all integers $j$.
  \end{compactenum}
\end{lemma}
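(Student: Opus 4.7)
I propose to establish (c)$\,\Rightarrow\,$(a), (c)$\,\Rightarrow\,$(b), (a)$\,\Rightarrow\,$(b), and (b)$\,\Rightarrow\,$(c). The first two links are immediate: (c)$\,\Rightarrow\,$(b) is by definition, and (c)$\,\Rightarrow\,$(a) follows by taking $X \coloneqq \Proj(R/L(\lambda))$ together with the binomial symmetry $\binom{j + \lambda_i - i}{j - i + 1} = \binom{j + \lambda_i - i}{\lambda_i - 1}$, which holds whenever $j \geqslant i - 1$; hence $h_{R/L(\lambda)}(j) = p(j)$ for all $j \geqslant r - 1$, so the Hilbert polynomial of $X$ is $p$.

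The implication (a)$\,\Rightarrow\,$(b) is the Gotzmann decomposition of the Hilbert polynomial: the Hilbert polynomial of any closed subscheme of $\PP^m$ can be written uniquely as $p(t) = \sum_{i=1}^r \binom{t + a_i - i + 1}{a_i}$ with $a_1 \geqslant \dotsb \geqslant a_r \geqslant 0$, and setting $\lambda_i \coloneqq a_i + 1$ yields the required integer partition; see \cite{Got}*{Lemma~2.9} or \cite{BH}*{Theorem~4.3.2}. The hypothesis $\deg p < m$ ensures $\lambda_1 = a_1 + 1 \leqslant m$.

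The substantive implication is (b)$\,\Rightarrow\,$(c): construct a lexicographic ideal $L(\lambda) \subseteq R$ whose Hilbert function equals $\sum_{i=1}^{r} \binom{j + \lambda_i - i}{j - i + 1}$ for every $j$. By Macaulay's theorem \cite{Mac}, a lexicographic ideal with a prescribed Hilbert function exists precisely when that function satisfies the growth condition $h(j+1) \leqslant h(j)^{\langle j \rangle}$, so it suffices to verify this condition for the binomial sum above — a direct computation exploiting the monotonicity $\lambda_1 \geqslant \lambda_2 \geqslant \dotsb \geqslant \lambda_r$ and the bound $\lambda_1 \leqslant m$. Alternatively, one can build $L(\lambda)$ explicitly by recursion on the length $r$: the base case $r = 1$ uses the saturated lex ideal $\langle x_0, \dotsc, x_{m - \lambda_1} \rangle$ of a $(\lambda_1 - 1)$-plane, which has Hilbert function $\binom{j + \lambda_1 - 1}{j}$; the inductive step adjoins, in each degree, lex segments of the form $x_0^{\,i-1}$ times a window of monomials of degree $j - i + 1$ supported in $\{x_0, \dotsc, x_{m - \lambda_i}\}$, accounting for the $i$-th summand.

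The principal obstacle is verifying that the sum $\sum_{i=1}^r \binom{j + \lambda_i - i}{j - i + 1}$ is Macaulay-admissible — equivalently, that the lex segments assigned to the different parts of $\lambda$ fit together without overlap. Both formulations reduce to the same combinatorial point: each additional part $\lambda_i$ with $i \geqslant 2$ contributes monomials strictly below the previous ones in lex order, which is forced precisely by $\lambda_i \leqslant \lambda_{i-1}$. Once this compatibility is confirmed, the cycle of implications closes and the proof is complete.
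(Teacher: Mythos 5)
Your plan is correct and follows essentially the same route as the paper's own outline: the Macaulay/Gotzmann decomposition for (a)$\,\Rightarrow\,$(b), verification of Macaulay's growth condition to produce the lexicographic ideal for (b)$\,\Rightarrow\,$(c), and taking $\Proj\bigl(R/L(\lambda)\bigr)$ with the binomial symmetry $\binom{j+\lambda_i-i}{j-i+1}=\binom{j+\lambda_i-i}{\lambda_i-1}$ for (c)$\,\Rightarrow\,$(a). The only differences are cosmetic (four implications instead of a single cycle, and citing Gotzmann's decomposition directly rather than the $\ell$-th Macaulay representation of $p(\ell)$, which the paper itself notes are equivalent).
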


\begin{proof}[Outline of Proof]
  We sketch the details because a proof may be derived from other accounts of
  Macaulay's work via elementary identities for binomial coefficients; see
  \cite{GKP}*{Table~174}.
  \begin{compactitem}[$\bullet$]
  \item[\textit{(a $\Rightarrow$ b)}] Let $\ell$ be a fixed sufficiently large
    integer.  One uses the $\ell$-th Macaulay representation for the integer
    $p(\ell)$ to obtain an integer partition
    $\lambda \coloneqq (\lambda_1, \lambda_2, \dotsc, \lambda_r)$ such that
    $\smash{p(t) = \sum_{i=1}^{r} \binom{t + \lambda_i -i}{\lambda_i -1}}$;
    see \cite{BH}*{Lemma~4.2.6 and Corollary~4.2.14} or
    \cite{St1}*{Proposition~2.3}.
  \item[\textit{(b $\Rightarrow$ c)}] One verifies that the function
    $h \colon \mathbb{Z} \to \mathbb{N}$ defined, for all integers $j$, by
    $\smash{h(j) \coloneqq \sum_{i=1}^r \binom{j+\lambda_i-i}{j-i+1}}$
    satisfies the Macaulay inequality
    $\smash{\bigl( h(j) \!\bigr)}^{\!\langle j \rangle} \geqslant h(j)$; see
    \cite{BH}*{Theorem~4.2.10}.
  \item[\textit{(c $\Rightarrow$ a)}] One takes $X$ to be the closed subscheme
    of $\PP^n$ defined by the lexicographic ideal $L(\lambda)$. \qedhere
  \end{compactitem}
\end{proof}

\begin{remark}
  By using the conjugate integer partition, Corollary~5.7 in \cite{Har} gives
  an alternative condition equivalent to Lemma~\ref{l:nonempty}~(b); see
  \cite{St1}*{Lemma~2.4}.
\end{remark}

We specify monomial generators and a primary decomposition of the
lexicographic ideal $L(\lambda)$; these generators are also listed in
\cite{RS}.

\begin{proposition}
  \label{p:irr}
  Let $\lambda \coloneqq (\lambda_1, \lambda_2, \dotsc, \lambda_r)$ be an
  integer partition and let $a_{\!j}$ be the number of parts in $\lambda$
  equal to $j$, for all positive integers $j$.  When $n \geqslant \lambda_1$,
  the corresponding lexicographic ideal is
  \[
    L(\lambda) = \bigl\langle
    x_{0}^{\,\smash{a_{m}}+1},
    x_0^{\, \smash{a_{m}}} \, x_1^{\,\smash{a_{m-1}+1}}, \dotsc,
    x_0^{\,\smash{a_{m}}} \, x_1^{\,\smash{a_{m-1}}} \, \dotsb \,
    x_{m-3}^{\,\smash{a_{3}}} \, x_{m-2}^{\,\smash{a_{2}+1}},
    x_0^{\,\smash{a_{m}}} \, x_1^{\,\smash{a_{m-1}}} \, \dotsb \,
    x_{m-2}^{\,\smash{a_{2}}} \, x_{m-1}^{\,\smash{a_{1}}}
    \bigr\rangle \, . 
  \]
  Moreover, the unique irredundant irreducible decomposition of this monomial
  ideal is
  \[
    L(\lambda) = 
    \bigcap_{\begin{subarray}{c}
        1 \leqslant i \leqslant m \\
        a_i \neq 0
      \end{subarray}}
    \bigl\langle
    x_{0}^{\,\smash{a_{m}} + 1}, x_{1}^{\,\smash{a_{m-1}}+1}, \dotsc,
    x_{m-i-1}^{\,\smash{a_{i+1}}+1}, x_{m-i}^{\,\smash{a_{i}}}
    \bigr\rangle \,.
    \vspace{-5pt}
  \]
\end{proposition}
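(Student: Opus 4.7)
The plan is to exploit the uniqueness of the lexicographic ideal with a prescribed Hilbert function (Lemma~\ref{l:nonempty}). Let $J$ denote the ideal generated by the listed monomials $g_0, g_1, \dotsc, g_{m-1}$. Inspection of these generators shows that a monomial $x_0^{b_0} x_1^{b_1} \dotsb x_m^{b_m}$ is divisible by some $g_k$ if and only if the truncated exponent vector $(b_0, b_1, \dotsc, b_{m-1})$ is lexicographically greater than or equal to $(a_m, a_{m-1}, \dotsc, a_1)$. Since this membership criterion depends only on the first $m$ coordinates, and these coordinates dominate $b_m$ in the monomial lex order, $J$ is automatically a lex ideal.

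I would then compute $h_{R/J}$ by counting standard monomials of degree $j$, stratified by the smallest index $\ell \in \{0, 1, \dotsc, m-1\}$ at which $b_\ell < a_{m-\ell}$ and $b_{j'} = a_{m-j'}$ for all $j' < \ell$. Applying the hockey stick identity $\sum_{s=0}^{a-1} \binom{N-s}{t} = \binom{N+1}{t+1} - \binom{N-a+1}{t+1}$ to the inner sum over $b_\ell$, and setting $S_\ell \coloneqq a_m + a_{m-1} + \dotsb + a_{m-\ell+1}$, the $\ell$-th stratum contributes $\binom{j + m - \ell - S_\ell}{m-\ell} - \binom{j + m - \ell - S_{\ell+1}}{m-\ell}$. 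Reindexing by $k = m - \ell$ (so $S_{m-k}$ becomes the number of parts of $\lambda$ exceeding $k$) and invoking the hockey stick identity in the reverse direction, the total collapses to $\sum_{i=1}^r \binom{j + \lambda_i - i}{j - i + 1}$, forcing $J = L(\lambda)$.

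For the decomposition, I compare membership monomial by monomial. A monomial $x^b$ lies outside $Q_i$ precisely when $b_{j'} \leq a_{m-j'}$ for all $j' \leq m - i - 1$ and $b_{m-i} \leq a_i - 1$; the latter forces $a_i \geq 1$. Choosing the least index $\ell$ with $b_\ell < a_{m-\ell}$ and setting $i = m-\ell$ aligns this condition with the negation of the lex criterion for $L(\lambda)$ derived above, yielding $L(\lambda) = \bigcap_{i \,:\, a_i \neq 0} Q_i$. To establish irredundancy, fix any $i_0$ with $a_{i_0} \neq 0$; the monomial $u_{i_0} \coloneqq \bigl(x_0^{a_m} x_1^{a_{m-1}} \dotsb x_{m-1}^{a_1}\bigr)/x_{m-i_0}$ is divisible by the generator $x_{m-i}^{a_i}$ of each $Q_i$ with $i \neq i_0$ (since its exponent at $x_{m-i}$ is exactly $a_i$), yet no generator of $Q_{i_0}$ divides it, so no component can be removed.

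The principal obstacle is aligning the two hockey stick summations: one indexed by the tie-breaking position $\ell$ among $\{0,\dotsc,m-1\}$, the other by the values of the parts of $\lambda$. The substitution $k = m-\ell$, together with the identification $S_{m-k} = \#\{i : \lambda_i > k\}$, is what reconciles them. All other ingredients reduce to straightforward divisibility checks.
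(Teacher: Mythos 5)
Your proposal is correct, but it takes a genuinely different route from the paper. The paper proves the irreducible decomposition first, by induction on the number of distinct parts of $\lambda$: it peels off $\langle x_0^{\,a_m}\rangle$, uses two short exact sequences to compute the Hilbert function of the intersection, invokes the fact that an intersection of lexicographic ideals is lexicographic together with Lemma~\ref{l:nonempty}~(c), and only afterwards recovers the listed generators as least common multiples of the generators of the intersectands (irredundancy is dispatched by noting the components have pairwise distinct dimensions). You instead work directly with the claimed generators: the lex-threshold membership criterion (a monomial lies in the ideal iff its truncated exponent vector is lexicographically at least $(a_m,\dotsc,a_1)$) shows at once that the ideal is a lex-segment ideal, your stratified count of standard monomials plus hockey-stick telescoping produces the Hilbert function $\sum_{i=1}^r\binom{j+\lambda_i-i}{j-i+1}$, and Lemma~\ref{l:nonempty}~(c) then forces equality with $L(\lambda)$; the decomposition follows from a monomial-by-monomial comparison, and your witness monomial $u_{i_0}$ gives a cleaner, fully explicit proof of irredundancy than the paper's dimension remark. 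Both arguments ultimately rest on the uniqueness of the lex ideal with prescribed Hilbert function, so the logical core is shared; what your version buys is a self-contained, non-inductive description of exactly which monomials lie in $L(\lambda)$, at the cost of some binomial bookkeeping in low degrees (you must use the combinatorial convention that binomials with negative top vanish, e.g.\ at $j=0$ the term $\binom{j-S_\ell+m-\ell-1}{m-\ell}$ manipulations only come out right with that convention — this is routine but worth a sentence), and you should also record the one-line observation that each intersectand, being generated by powers of distinct variables, is irreducible, which is needed to call your intersection an irreducible decomposition.
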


\begin{proof}
  We first establish the decomposition.  As each intersectand is generated by
  powers of the variables and no two have the same dimension, this
  intersection is the irredundant irreducible decomposition of some monomial
  ideal.  It remains to show that this ideal is $L(\lambda)$.  Since
  $a_{\!j} = 0$ for all $j > \lambda_1$, each irreducible ideal contains
  $\langle x_0, x_1, \dotsc, x_{m - \lambda_1 - 1} \rangle$ and we may
  assume that $\lambda_1 = m$.

  We proceed by induction on the number $e$ of positive entries in
  $(a_1, a_2, \dotsc, a_{m})$.  When $e = 1$, the integer partition is
  $(m^{a_m})$ and the ideal is $\langle x_0^{\,\smash{a_m}} \rangle$.  The
  principal ideal $\langle x_0^{\,\smash{a_m}} \rangle$ is lexicographic.
  Since the monomials $\{1, x_0^{}, \dotsc, x_0^{\,\smash{a_m -1}}\}$ form a
  basis as free $k[x_1,x_2, \dotsc, x_m]$-module for the quotient
  $R/\langle x_0^{\,\smash{a_n}} \rangle$, it follows that
  $h_{R/\langle x_0^{\,\smash{a_m}} \rangle}(j) = \smash{\sum_{i=1}^{a_{m}}
    \binom{j + m - i}{j-i+1}}$.  By Lemma~\ref{l:nonempty}~(c), we deduce that
  $L(m^{a_m}) = \langle x_0^{\,\smash{a_m}} \rangle$ and the base case holds.

  Now, assume that $e > 1$.  Set
  $I \coloneqq L(m^{a_{m}}) = \langle x_0^{\,\smash{a_m}} \rangle$.  The
  induction hypothesis implies that
  \[
    J \coloneqq 
        \bigcap_{\begin{subarray}{c}
        1 \leqslant i \leqslant m - 1 \\
        a_i \neq 0
      \end{subarray}}
    \bigl\langle
    x_{0}^{\,\smash{a_{m}} + 1}, x_{1}^{\,\smash{a_{m - 1}}+1}, \dotsc,
    x_{m-i-1}^{\,\smash{a_{i+1}}+1}, x_{m-i}^{\,\smash{a_{i}}}
    \bigr\rangle
    \vspace{-5pt}
  \]
  is the lexicographic ideal associated to the integer partition
  $\nu \coloneqq (\lambda_{a_m + 1}, \lambda_{a_m +2}, \dotsc,
  \lambda_r)$.  Since the intersection of lexicographic ideals is again
  lexicographic, it is enough to prove that the ideals $I \cap J$ and
  $L(\lambda)$ have the same Hilbert function.  From the short exact sequences
  of graded $R$-modules
  \[
    \begin{tikzcd}[row sep = -1pt, column sep = 15pt]
      0 \arrow[r]
      & \dfrac{R}{I \cap J} \arrow[r]
      & \dfrac{R}{I} \oplus \dfrac{R}{J} \arrow[r]
      & \dfrac{R}{I+J} \arrow[r]
      & 0 \, \phantom{,}
      &[-10pt] \text{and} \phantom{,} \\
      0 \arrow[r]
      & \dfrac{R(-a_{n})}{J} \arrow[r]
      & \dfrac{R}{J} \arrow[r]
      & \dfrac{R}{I+J} \arrow[r]
      & 0 \, ,  
    \end{tikzcd}
  \]
  we see that
  $h_{R/I \cap J}(j) = h_{R/I}(j) + h_{R/J}(j) - h_{R/(I+J)}(j) = h_{R/I}(j) +
  h_{R/J}(j-a_{n})$ for all integer $j$.  The equality $J = L(\nu)$ implies
  that
  $h_{R/J}(j) = \sum_{i = a_{m}+1}^{r} \binom{j + \lambda_i - i + a_m}{j -
    i + a_{m}+1}$, so we deduce that
  \[
    h_{R/I \cap J}(j)
    = \sum_{i = 1}^{a_{m}} \binom{j + \lambda_i - i}{j - i + 1}
    + \sum_{i = a_{m}+1}^{r} \binom{j + \lambda_i - i}{j - i + 1}
    = \sum_{i = 1}^{r} \binom{j + \lambda_i - i}{j - i + 1} \, ,
  \]
  which by Lemma~\ref{l:nonempty}~(c) completes the induction.

  Lastly, we establish that the given set of monomials generate $L(\lambda)$.
  The intersection of monomial ideals is generated by the least common
  multiples of their monomial generators, so we observe 
  \begin{align*}
    x_0^{\,\smash{a_{m}}} \, x_1^{\,\smash{a_{m-1}}} \, \dotsb \,
    x_{m-i-1}^{\,\smash{a_{i+1}}} \, x_{m-i}^{\,\smash{a_{i}+1}}
    &= \operatorname{lcm} \bigl(
      x_{m-i}^{\,\smash{a_{i}+1}}, x_{m-i}^{\,\smash{a_{i}+1}}, \dotsc,
      x_{m-i}^{\,\smash{a_{i}+1}}, x_{m-i}^{\,\smash{a_{i}}},
      x_{m-i-1}^{\,\smash{a_{i+1}}}, \dotsc, x_{0}^{\,\smash{a_{m}}}
      \bigr) %\quad \text{for all $2 \leqslant i \leqslant m$,}
    \\
    x_0^{\,\smash{a_{m}}} \, x_1^{\,\smash{a_{m-1}}} \, \dotsb \,
    x_{m-2}^{\,\smash{a_{2}}} \, x_{m-1}^{\,\smash{a_{1}}}
    &= \operatorname{lcm} \bigl(
      x_{m-1}^{\,\smash{a_{1}}}, x_{m-2}^{\,\smash{a_{2}}}, \dotsc,
      x_{0}^{\,\smash{a_{m}}}
      \bigr) \, ,
  \end{align*}
  for all $2 \leqslant i \leqslant m$.
  Hence, each of the given monomials is a least common multiple of a generator
  from the irreducible components.  Since each variable $x_i$ appears as a
  minimal generator in an irreducible component with exponent either
  $a_{m-i}$ or $a_{m-1}+1$, we see that the least common multiple of any
  subset of generators for the irreducible components is divisible by at least
  one of the given monomials, so the opposite inclusion also holds.
\end{proof}

We complete our converse to Lemma~\ref{l:poly} by relating lexicographic
ideals to residual flags.  This geometric interpretation for the lexicographic
ideal $L(\lambda)$ appears to be new.

\begin{corollary}
  \label{c:lex}
  Let $(n, d)$ be the type of a residual flag in $\PP^{m}$.  For all
  $1 \leqslant i \leqslant e$, let $X_i$ be the closed subscheme in $\PP^{m}$
  defined by the lexicographic ideal
  $L(n_{i\vphantom{+1}}^{d_{i\vphantom{+1}}}, n_{\,i+1}^{d_{i+1}}, \dotsc,
  n_{e\vphantom{+1}}^{d_{e\vphantom{+1}}})$.  The chain of closed immersions
  $\varnothing \subset X_e \subset X_{e-1} \subset \dotsb \subset X_1$ forms a
  residual flag of type $(n,d)$ in $\PP^{m}$.
\end{corollary}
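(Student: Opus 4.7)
The plan is to establish the following identity by descending induction on $i$:
\[
  L(\lambda^{(i)}) = I_{\Lambda_i} + x_{m-n_i}^{d_i} \cdot L(\lambda^{(i+1)}),
\]
where $\lambda^{(i)} \coloneqq (n_i^{d_i}, n_{i+1}^{d_{i+1}}, \dotsc, n_e^{d_e})$, the ideal $I_{\Lambda_i} \coloneqq \langle x_0, x_1, \dotsc, x_{m-n_i-1} \rangle$ defines the distinguished $n_i$-plane $\Lambda_i \subseteq \PP^{m}$, and we adopt the convention $L(\lambda^{(e+1)}) \coloneqq R$.  This identity carries the full content of the corollary: the containment $I_{\Lambda_i} \subseteq L(\lambda^{(i)})$ forces $X_i \subseteq \Lambda_i$; reducing modulo $I_{\Lambda_i}$ factors the ideal of $X_i$ inside $\Lambda_i$ as $\bar x_{m-n_i}^{d_i} \cdot \bar L(\lambda^{(i+1)})$, which exhibits $X_i = D_i \cup X_{i+1}$ in $\Lambda_i$ with $D_i$ the degree-$d_i$ hypersurface cut out by $x_{m-n_i}^{d_i}$; and the strict inequality $n_{i+1} < n_i$ ensures $\Lambda_{i+1} \subseteq \Lambda_i$, so the inductively constructed $X_{i+1} \subseteq \Lambda_{i+1}$ genuinely lies in $\Lambda_i$.

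To prove the identity, I read the generators supplied by Proposition~\ref{p:irr}.  Since $a_j^{(i)} = 0$ for all $j > n_i$, the first $m - n_i$ generators of $L(\lambda^{(i)})$ collapse to the linear forms $x_0, x_1, \dotsc, x_{m-n_i-1}$ (which generate $I_{\Lambda_i}$), the next generator is $x_{m-n_i}^{d_i+1}$, and every subsequent generator carries $x_{m-n_i}^{d_i}$ as an explicit factor.  A parallel reading of $L(\lambda^{(i+1)})$ shows that these non-linear generators of $L(\lambda^{(i)})$ are precisely $x_{m-n_i}^{d_i}$ times the generators of $L(\lambda^{(i+1)})$ that are not already in $I_{\Lambda_i}$: where $L(\lambda^{(i+1)})$ has the linear generators $x_{m-n_i}, x_{m-n_i+1}, \dotsc, x_{m-n_{i+1}-1}$ together with higher-degree generators in $x_{m-n_{i+1}}, \dotsc, x_m$, the ideal $L(\lambda^{(i)})$ has the corresponding generators each multiplied by $x_{m-n_i}^{d_i}$.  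The base case $L(\lambda^{(e)}) = I_{\Lambda_e} + \langle x_{m-n_e}^{d_e}\rangle$ is an immediate specialization of Proposition~\ref{p:irr} to $\lambda^{(e)} = (n_e^{d_e})$.

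The main technical step is this term-by-term comparison between the generator lists for $L(\lambda^{(i)})$ and $L(\lambda^{(i+1)})$.  What tames the bookkeeping is that the explicit formula from Proposition~\ref{p:irr} depends only on the multiplicity sequence $(a_1, a_2, \dotsc, a_m)$, and these sequences for $\lambda^{(i)}$ and $\lambda^{(i+1)}$ differ in the single coordinate $a_{n_i}$ (equal to $d_i$ for the former and $0$ for the latter).  Alternatively, the same identity can be derived from the irredundant irreducible decomposition in Proposition~\ref{p:irr} by invoking the distributive law $\bigcap_k (A + B_k) = A + \bigcap_k B_k$ for monomial ideals supported on disjoint variable sets.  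The remaining conditions of Definition~\ref{res flag defn} are automatic: flatness of each $X_i$ over $\Spec(k)$ holds since the base is a field, and the identity gives $L(\lambda^{(i)}) \subseteq L(\lambda^{(i+1)})$ (because $I_{\Lambda_i} \subseteq I_{\Lambda_{i+1}} \subseteq L(\lambda^{(i+1)})$ and $x_{m-n_i}^{d_i} L(\lambda^{(i+1)}) \subseteq L(\lambda^{(i+1)})$), supplying the chain $X_{i+1} \subseteq X_i$.
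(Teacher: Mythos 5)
Your proposal is correct and follows essentially the same route as the paper: both arguments read off the explicit generators from Proposition~\ref{p:irr}, note that $X_i$ lies in the $n_i$-plane cut out by $x_0, \dotsc, x_{m-n_i-1}$, and factor the ideal of $X_i$ inside that plane as $x_{m-n_i}^{d_i}$ times the ideal of $X_{i+1}$, which is exactly the $d_i$-residual inclusion. Your identity $L(\lambda^{(i)}) = I_{\Lambda_i} + x_{m-n_i}^{d_i}\,L(\lambda^{(i+1)})$ is just the global (unrestricted) form of the paper's factorization $I_i = x_{m-n_i}^{a_{n_i}} \cdot J$ with $a_{n_i} = d_i$, with the same bookkeeping via the multiplicities $a_j$.
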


\begin{proof}
  For all $1 \leqslant i \leqslant e$, the irreducible decomposition in
  Proposition~\ref{p:irr} implies that $X_{i+1} \subset X_{i}$ and the
  monomial generators in Proposition~\ref{p:irr} establish that the closed
  subscheme $X_i$ is contained in the $n_i$-plane in $\PP^{m}$ defined by
  the monomial ideal $\langle x_0, x_1, \dotsc, x_{m - n_i - 1} \rangle$.
  For all positive integers $j$, let $a_{\!j}$ denote the number of parts in
  the integer partition
  $(n_{1}^{\smash{d_{1}}}, n_{2}^{\smash{d_{2}}}, \dotsc,
  n_{e\vphantom{1}}^{\smash{d_{e\vphantom{1}}}})$ equal to $j$.  Restricting to the
  linear subspace
  $\PP^{n_i} \coloneqq \Proj(k[x_{m - n_i}, x_{m - n_i +1}, \dotsc,
  x_{m}])$ in $\PP^{m}$, Proposition~\ref{p:irr} also shows that the
  closed subscheme $X_i$ is defined by the monomial ideal
  \begin{align*}
    I_i &\coloneqq \bigl\langle
    x_{m - n_i}^{\,\smash{a_{n_i}}+1}, \;
    x_{m - n_i}^{\, \smash{a_{n_i}}} \,
    x_{m - n_i-1}^{\,\smash{a_{n_i +1}+1}}, \; \dotsc, \;
    x_{m - n_i}^{\, \smash{a_{n_i}}} \,
    x_{m - n_i-1}^{\,\smash{a_{n_i +1}+1}} \dotsb \,
    x_{m - 3}^{\,\smash{a_{3}}} \, x_{m - 2}^{\,\smash{a_{2}+1}}, \;
    x_{m - n_i}^{\, \smash{a_{n_i}}} \,
    x_{m - n_i-1}^{\,\smash{a_{n_i +1}+1}} \, \dotsb \,
    x_{m - 2}^{\,\smash{a_{2}}} \, x_{m - 1}^{\,\smash{a_{1}}}
    \bigr\rangle \, . 
  \end{align*}
  It follows that $I_i = x_{m - n_i}^{\, \smash{a_{n_i}}} \cdot J$
  where
  \begin{align*}
    J
    &\coloneqq \bigl\langle
      x_{m - n_i}^{}, \; 
      x_{m - n_i-1}^{\,\smash{a_{n_i +1}+1}}, \; \dotsc, \;
      x_{m - n_i-1}^{\,\smash{a_{n_i +1}+1}} \dotsb \,
      x_{m - 3}^{\,\smash{a_{3}}} \, x_{m - 2}^{\,\smash{a_{2}+1}}, \;
      x_{m - n_i-1}^{\,\smash{a_{n_i +1}+1}} \, \dotsb \,
      x_{m - 2}^{\,\smash{a_{2}}} \, x_{m - 1}^{\,\smash{a_{1}}}
      \bigr\rangle \\
    &= 
      \bigl\langle
      x_{n-n_i}^{}, x_{n-n_i-1}^{}, \dotsc, x_{n-n_{i+1}-1}^{}
      \bigr\rangle + I_{i+1} \, . 
  \end{align*}
  Since $a_{n_i} = d_{i}$, the closed immersion $X_{i+1} \subset X_{i}$ is a
  $d_{i}$-residual inclusion in $\PP^{n_i}$.  Therefore, the chain of closed
  immersions
  $\varnothing \subset X_{e} \subset X_{e-1} \subset \dotsb \subset X_1$ is a
  residual flag of type $(n, d)$ in $\PP^{m}$.
\end{proof}

\begin{remark}
  By definition, the lexicographic point in a Hilbert scheme corresponds to
  the closed subscheme in $\PP^{m}$ defined by the lexicographic ideal.
  Theorem~1.4 in \cite{RS} proves that the lexicographic point is smooth, so
  this point lies on a unique irreducible component called the lexicographic
  component.  Theorem~4.1 in \cite{RS} computes the dimension of this
  component.  When
  $(n, d) \coloneqq (n_1, d_1), (n_2, d_2), \dotsc, (n_e, d_e)$ is the type of
  a residual flag, $n_0 \coloneqq m$, and $\lambda$ is its associated integer
  partition, the lexicographic component determined by $L(\lambda)$ in
  % the polynomial ring
  $R \coloneqq k[x_0, x_1, \dotsc, x_{m}]$ is%
  {%\small
    \begin{equation}
      \label{f:dim}
      \hspace*{-8pt}
      \begin{cases}
        \displaystyle\sum\limits_{i=1}^{e}  \left[  
          \dbinom{n_i + d_i}{d_i}  \! -   1 +
          (n_i +  1)(n_{i-1} - n_i)
          \right]
        & \hspace*{-10pt} \text{if $n_e > 1$ and $d_{e} > 1$,} \\
        - (n_{e-1} - n_{e})
        +  \displaystyle\sum\limits_{i=1}^{e}   \left[ 
          \dbinom{n_i + d_i}{d_i} \! -   1
          + (n_i +  1)(n_{i-1} - n_i)
          \right]
        & \hspace*{-10pt} \text{if $n_e  > 1$ and $d_{e} = 1$,} \\
        n_0 \, d_{e} +  \displaystyle\sum\limits_{i=1}^{e-1} \left[ 
          \dbinom{n_i + d_i}{d_i} \! -  1 + (n_i +  1)(n_{i-1} - n_i)
          \!\right]
        & \hspace*{-10pt} \text{if $n_e = 1$ and $d_{e-1} > 1$,} \\
        n_0 \, d_{e} - (n_{e-2} - n_{e-1})
        + \displaystyle\sum\limits_{i=1}^{e-1}   \left[ 
          \dbinom{n_i + d_i}{d_i}  -  1 + (n_i +  1)(n_{i-1} - n_i)
          \right] 
        & \hspace*{-10pt} \text{if $n_e  = 1$ and $d_{e-1} = 1$,} \\
        n_0 \, d_{e}
        & \hspace*{-10pt} \text{if $n_{e} = 1$ and $e = 1$.}
      \end{cases}
      \hspace*{-10pt}
    \end{equation}}%
\end{remark}

\begin{question}
  The saturated monomial ideals defining residual flags of type $(n, d)$ in
  $\PP^{m}$ determine the torus-fixed points in the parameter space
  $\Res{n}{d}$.  Following Proposition~\ref{p:irr}, the irredundant
  irreducible decomposition for each such monomial ideal has a combinatorial
  description.  Can one use this perspective to count these monomial ideals
  and, thereby, compute the Euler characteristic of the projective scheme
  $\Res{n}{d}$?
\end{question}

%%%%%%%%%%%%%%%%%%%%%%%%%%%%%%%%%%%%%%%%%%%%%%%%%%%%%%%%%%%%%%%%%%%%%%%%%%%%%%
\section{Geometry of smooth Hilbert schemes}

In this section, we identify the Hilbert schemes isomorphic to a parameter
space of residual flags.  Exploiting this identification, we describe the
closed subscheme corresponding to a general point on any smooth Hilbert
scheme.  Thus, we obtain a birational description of all smooth Hilbert
schemes.

\subsection*{Geometry}
Let $E$ be a locally free sheaf on a locally noetherian scheme $S$.  The
projective bundle $\PP(E)$ carries a tautological invertible sheaf that is
relatively ample over $S$. We compute the Hilbert polynomials for closed
subschemes in $\PP(E)$ relative to this tautological bundle. For any numerical
polynomial $p$, the set of $T$-valued points of the functor
$\Hilb^p\bigl(\PP(E) \!\bigr)$ is the set of closed subschemes
$X \subseteq \PP(E_T \!)$ that are flat over the $S$-scheme $T$ and have
Hilbert polynomial $p$. The $S$-scheme representing this functor is
projective; see \cite{Kollar}*{Theorem~1.4}.

\begin{lemma}
  \label{l:loc}
  Let $E$ be locally free sheaf on $S$ of constant rank $m + 1$ and fix a
  polynomial $p$ in $\mathbb{Q}[t]$. The Hilbert scheme
  $\Hilb^{p} \bigl( \PP(E) \! \bigr)$ is smooth over $S$ if and only if the
  fibre $\Hilb^{p}(\PP^{m})$ is nonsingular over every geometric point of $S$.
\end{lemma}

\begin{proof}
  Let $X \subseteq \PP^{m}$ be a closed subscheme in the fibre of $\PP(E)$
  over a geometric point in $S$. When $X$ corresponds to a smooth point on
  $\Hilb^{p}(\PP^{m})$, Theorem~2.10 in \cite{Kollar} proves that the
  structure map $\Hilb^{p} \bigl( \PP(E)\!\bigr) \to S$ is flat at this
  geometric point. Therefore, this structure map is smooth if and only if its
  the fibre is nonsingular over every geometric point.
\end{proof}

\begin{theorem}
  \label{t:resH}
  Let $(n, d)$ be the type of a residual flag and let
  $\lambda \coloneqq (\lambda_1, \lambda_2, \dotsc, \lambda_r)$ be its
  associated integer partition. Set
  $p(t) = \sum_{i=1}^{r} \binom{t + \lambda_i - i}{\lambda_i-1}$.  Assume that
  $E$ is a locally free sheaf on $X$ of constant rank $m + 1$. The natural
  morphism
  \[
    \pi \colon \Res{n}{d} \to \Hilb^{p} \bigl(\PP(E) \!\bigr)
  \]
  sending a residual flag
  $\varnothing \subset X_e \subset X_{e-1} \subset \dotsb \subset X_{1}$
  to the closed subscheme $X_1 \subset \PP(E)$, is an isomorphism if and only
  if one of the two conditions holds:
  \begin{compactenum}[\upshape (1)]
  \item[{\upshape (2)}] $m \geqslant \lambda_1$ and $\lambda_r \geqslant 2$,
  \item[{\upshape (3)}] $\lambda = (1)$ or
    $\lambda = (m^{r-2}, \lambda_{r-1}, 1)$ where $r \geqslant 2$ and
    $m \geqslant \lambda_{r-1} \geqslant 1$.
  \end{compactenum}
  In both cases, the Hilbert scheme $\Hilb^{p} \!\bigl( \PP(E) \!\bigr)$ is
  smooth over $S$.
\end{theorem}

\begin{proof}
  Suppose that $S = \Spec(k)$ for some algebraically closed field $k$.
  Theorem~1.1 in \cite{St1} demonstrates that Conditions~2 and~3 characterize
  when a nontrivial Hilbert scheme $\Hilb^{p}(\PP^{m})$ has a unique
  Borel-fixed point.  Moreover, Lemma~5.6 in \cite{St1} proves that the target
  $\Hilb^{p}(\PP^{m})$ is nonsingular and irreducible in this situation.
  Proposition~\ref{p:rep} and Corollary~\ref{c:dim} show that the source
  $\Res{\kappa}{d}$ is a smooth projective variety.  Since $\pi$ is injective,
  it is enough to certify that the dimensions of the source and target agree.
  Using Corollary~\ref{c:dim} and equation~\eqref{f:dim}, one verifies that
  the dimension of the lexicographic component in $\Hilb^{p}(\PP^{m})$ equals
  the dimension of the parameter space of residual flags if and only if
  Conditions~2 or~3 holds.

  Suppose that $S$ is any locally noetherian scheme. Lemma~\ref{l:loc} implies
  that the Hilbert scheme is smooth.  Hence, the source and target of the
  morphism $\pi$ are smooth.  Since the induced morphism on fibres over any
  geometric point is an isomorphism, the result follows.
\end{proof}
  
% \begin{table}[ht]
%   \addtocounter{equation}{1}
%   \caption{All integer partitions $\lambda$ such that $\lambda_r = 1$ and
%   the dimension of the lexicographic component in $\Hilb^q(\PP^n)$ equals
%   the dimension of $\Res{\kappa}{d}{\PP^n}$}
%   \label{f:t1}
%   \begin{tabular}{lp{15pt}l} %\hline \\[-13pt]
%     \multicolumn{1}{c}{Case}
%     && \multicolumn{1}{c}{Integer Partitions} \\ \hline \\[-10pt]
%     $e >1$, $d_{e} > 1$, and $d_{e-1} > 1$
%     && $\lambda = ( n^{r-2}, 1^2)$ where $r \geqslant 4$ \\
%     $e > 1$, $d_{e} > 1$, and $d_{e-1} = 1$
%     && $\lambda = (n,1^2)$ \\
%     $e = 1$ and $d_{e} > 1$
%     && $\lambda = ( 1^2 )$ \\      
%     $e > 1$, $d_{e} = 1$, and $d_{e-1} > 1$
%     && $\lambda = ( n^{r-1}, 1 )$ where $r \geqslant 3$ \\
%     $e > 2$, $d_{e} = 1$, and $d_{e-1} = 1$
%     && $\lambda = ( n^{r-2}, \lambda_{r-1}, 1)$ where $r \geqslant 3$ and $n
%     > \lambda_{r-1} > 1$ \\
%     $e = 2$, $d_{e} = 1$, and $d_{e-1} = 1$
%     && $\lambda = (\lambda_{r-1}, 1)$ where $n \geqslant \lambda_{r-1} > 1$
%     \\
%     $e = 1$ and $d_{e} = 1$
%     && $\lambda = ( 1)$ \\ \hline    
%   \end{tabular}
% \end{table}

\begin{example}
  The conditions in Theorem~\ref{t:resH} cover the well-known cases of
  hypersurfaces and Grassmannians.  Consider an integer partition
  $\lambda = (\lambda_1^r)$ and set
  $p(t) \coloneqq \sum_{i=1}^{r} \binom{t+\lambda_1-i}{\lambda_1-1}$.  When
  $\lambda_1 = m$, each point in $\Hilb^p \bigl( \PP(E) \!\bigr)$ corresponds
  to a hypersurface of degree $r$ in $\PP(E)$; see Lemma~\ref{l:1pair}.  More
  generally, each point in $\Hilb^p \bigl( \PP(E) \!\bigr)$ corresponds to a
  hypersurface of degree $r$ lying some $\lambda_1$\nobreakdash-dimensional
  linear subspace of $\PP(E)$.  In the special case $r = 1$, the Hilbert
  scheme $\Hilb^p \bigl( \PP(E) \!\bigr)$ is the Grassmannian parametrizing
  $(\lambda_1 -1)$-dimensional linear subspaces in $\PP(E)$.
\end{example}

\begin{example}
  For the integer partition $\lambda = (1^2)$, Theorem~\ref{t:resH} shows
  that each point in the Hilbert scheme $\Hilb^{2} \bigl( \PP(E) \!\bigr)$
  correspond to a hypersurface of degree $2$ lying on some line in $\PP(E)$.
  Alternatively, the Hilbert scheme of two points in $\PP(E)$ is also known to
  be the blow-up of the diagonal of the quotient scheme
  $\PP(E) \times_{\!S} \PP(E) / \mathfrak{S}_2$, where the symmetric group
  $\mathfrak{S}_2$ on two elements acts by permuting the factors in the
  product $\PP(E) \times_{\!S} \PP(E)$.
\end{example}

\begin{remark}
  The geometry of residual flags also explains why the morphism $\pi$ cannot
  be surjective when Conditions~2 and~3 fail to holds.  To avoid these
  conditions, we may assume that $\lambda_r = 1$, $r \geqslant 3$, and
  $m > \lambda_{r-2}$.  The smallest scheme $X_e$ in the residual flag is a
  degree $d_e$ hypersurface in a line.  Hence, the map $\pi$ cannot be
  surjective when $d_e \geqslant 3$.  When $d_e \leqslant 2$, there exists a
  line $\Lambda_e$ containing $X_e$.  The defining properties of a residual
  flag require the line $\Lambda_e$ to be contained in the latent plane
  $\Lambda_{e-1}$ which by assumption has dimension less than $m$.  Since a
  general line is not contained such a plane, the map $\pi$ also not
  surjective in this case.
\end{remark}

\begin{remark}
  The strategy outlined in Question~\ref{q:chow} also leads to a description
  of the Chow ring (and integral cohomology ring) of the Hilbert schemes
  classified in Theorem~\ref{t:resH}.
\end{remark}

\begin{example}
  \label{e:trivial}
  Two trivial Hilbert schemes, not covered by Theorem~\ref{t:resH}, are
  nevertheless particular Grassmannians.  When $\lambda = (m+1)$ and
  $p(t) = \binom{t+m}{m}$, the Hilbert scheme
  $\Hilb^{p} \bigl( \PP(E) \!\bigr)$ is a one point corresponding to closed
  subscheme $\PP(E)$ itself.  When $r = 0$, the Hilbert scheme
  $\Hilb^{0} \bigl( \PP(E) \!\bigr)$ is a one point corresponding to empty
  scheme in $\PP(E)$.
\end{example}

\subsection*{Birationality}
Before examining the birational geometry of the other smooth Hilbert schemes,
we remember that some Hilbert schemes split into a product.  Let
$\lambda = (m^s, \lambda_{s+1}, \lambda_{s+2}, \dotsc, \lambda_r)$ be an
integer partition with $m > \lambda_{s+1}$ and set
$p(t) \coloneqq \sum_{i=1}^{r} \binom{t+\lambda_i-i}{\lambda_i-1}$.  We see
that $p(t) = q_1(t) + q_2(t-s)$ where
$q_1(t) \coloneqq \sum_{i=1}^{s} \binom{t + m - i}{m - 1}$ is the Hilbert
polynomial for a hypersurface of degree $s$ in $\PP(E)$ and
$q_2(t) \coloneqq \sum_{i=1}^{r-s}
\binom{t+\lambda_{i+s}-i}{\lambda_{i+s}-1}$.  Lemma~\ref{l:Cart} shows that
the Hilbert scheme parametrizing of these hypersurfaces is
$\PP \bigl( \Sym^{s} (E^*) \!\bigr)$ and Remark~2 in \cite{Fogarty}*{p.~514}
yields there is a natural splitting
\begin{equation}
  \label{f:split}
  \Hilb^{p} \bigl( \PP(E) \! \bigr)
  \cong \PP \bigl( \Sym^{s} (E^*) \!\bigr) \times_{\!S} \Hilb^{q_2} \bigl(
  \PP(E) \!  \bigr) \, .
\end{equation}

Given an integer partition
$\lambda \coloneqq (\lambda_1, \lambda_2, \dotsc, \lambda_r)$, the new integer
partition $\lambda \cup (1)$ is defined to be
$(\lambda_1, \lambda_2, \dotsc, \lambda_r, 1)$ and has length $r+1$; see
\cite{Mac15}*{Subsection~1.1}.

\begin{proposition}
  \label{p:birat}
  Let $(n,d)$ be a residual type and
  $\lambda \coloneqq (\lambda_1, \lambda_2, \dotsc, \lambda_r)$ be its integer
  partition. Set
  $p(t) \coloneqq \sum_{i=1}^r \binom{t+\lambda_i-i}{\lambda_i-1}$.  Assume
  that one of the following conditions holds:
  \begin{compactenum}[\upshape (1)]
  \item[{\upshape (4)}]
    $\lambda \cup (1) = (m^{r-s-3}, \lambda_{r-s-2}^{s+2}, 1)$ where
    $r-3 \geqslant s \geqslant 0$, and
    $m-1 \geqslant \lambda_{r-s-2} \geqslant 3$, or
  \item[{\upshape (5)}] $\lambda \cup (1) = ( m^{r-s-5}, 2^{s+4}, 1)$ where
    $r-5 \geqslant s \geqslant 0$.
  \end{compactenum}
  The Hilbert scheme $\Hilb^{p+1} \bigl( \PP(E) \!\bigr)$ is smooth over $S$.
  Moreover, a general point on this Hilbert scheme corresponds to the disjoint
  union of a residual flag of type $(n, d)$ and a point.
\end{proposition}

\begin{proof}
  From the splitting in \eqref{f:split}, it is enough to consider an integer
  partition $\lambda = (\lambda_1^r)$ where $r \geqslant 1$ and
  $\lambda_1 \geqslant 2$, excluding the two integer partitions $(2^2)$ and
  $(2^3)$.  By Lemma~\ref{l:loc}, we many assume that the base scheme is the
  spectrum of an algebraically closed field.  Over a field of characteristic
  zero, the Theorem~A in \cite{Ramkumar} classifies all Hilbert schemes with
  precisely two Borel-fixed points; Theorem~B in \cite{Ramkumar} and
  Theorem~1.1 in \cite{St2} also describe this classification over any
  algebraically closed field.  Conditions~(4) and (5) guarantee that the
  Hilbert scheme $\Hilb^{p+1}(\PP^n)$ has two Borel-fixed points.  By
  computing the dimension of the tangent space at the non-lexicographic
  Borel-fixed point, Theorem~A in \cite{Ramkumar} demonstrates that
  $\Hilb^{p+1}(\PP^n)$ is nonsingular.

  To understand a general point, consider the universal flag $X_1$ of type
  $(\lambda_1,s)$ in $\PP(E)$ and the universal closed subscheme $Z$ having
  length one on $\PP(E) = \Hilb^{1} \!\big( \PP(E) \!\bigr)$. As the structure
  map is proper, their intersection determines a closed subset in the product
  $\Res{\lambda_1}{s} \times_{\!S} \PP(E)$.  Let $U$ denote the open complement.
  There is a morphism $\psi \colon U \to \Hilb^{p+1} \bigl( \PP(E) \!\bigr)$
  induced by sending the pair $(X_1, Z)$ to their disjoint union. The source
  and target of $\psi$ are smooth $S$-schemes and, using
  Corollary~\ref{c:dim} and equation~\eqref{f:dim}, one verifies that they
  have the same relative dimension.  Over each geometric point in $S$, the
  induced morphism on the fibres is an open immersion. We conclude that
  $\psi \colon \Res{\lambda_1}{s} \times_{\!S} \PP(E) \dasharrow \Hilb^{p+1}
  \bigl( \PP(E) \!\bigr)$ is birational map.
\end{proof}

\begin{remark}
  Under the hypothesis of Proposition~\ref{p:birat}, the Hilbert scheme
  $\Hilb^{p+1}({\PP(E)})$ and the product $\Res{m}{d} \times_{\!S} \PP(E)$ are
  birational. However, these schemes are not isomorphic. For instance, the
  existence of two different Borel-fixed points on the Hilbert scheme implies
  that there is more than one way to embedded a point of multiplicity $1$ into
  the lexicographic ideal; see Proposition~\ref{p:irr} and
  Proposition~\ref{p:sing}.
\end{remark} 

\begin{example}
  \label{e:pts}
  The integer partition $\lambda = (1^r)$ is associated the contant Hilbert
  polynomial $r$.  The Hilbert scheme $\Hilb^{r}\bigl( \PP(E) \!\bigr)$ is
  known to be smooth in two cases: Theorem~2.4 in \cite{Fogarty} applies when
  $m = 2$ and Equation~(0.2.1) in \cite{Che} applies when $r \leqslant 3$.
  In either case, this Hilbert scheme is birational to the $r$-fold symmetric
  product
  $\PP(E) \times_{\!S} \PP(E) \times_{\!S} \dotsb \times_{\!S} \PP(E) /
  \mathfrak{S}_r$ where the symmetric group $\mathfrak{S}_r$ on $r$ elements
  acts by permuting the factors in the product
  $\PP(E) \times_{\!S} \PP(E) \times_{\!S} \dotsb \times_{\!S} \PP(E)$.
  
  Using the splitting
  \eqref{f:split}, this analysis extends to the integer
  partition $\lambda = (m^{r-s}, 1^{s})$ where $r \geqslant s \geqslant 0$.
  Set $p(t) \coloneqq \sum_{i=1}^{r} \binom{t + \lambda_i -i}{\lambda_i -1}$.
  Assuming $m = 2$ or $r \leqslant 3$, a general point on
  $\Hilb^{p} \bigl( \PP(E) \! \bigr)$ corresponds to the disjoint union of a
  hypersurface of degree $r-s$ and $s$ isolated points.
\end{example}

\begin{remark}
  \label{r:smth}
  The seven conditions in Theorem~\ref{t:main} imply that the Hilbert scheme
  $\Hilb^{p} \!\bigl(\PP(E) \!\bigr)$ is smooth over $S$: Example~\ref{e:pts}
  handles Conditions~1 and~6, Theorem~\ref{t:resH} handles conditions~2
  and~3, Proposition~\ref{p:birat} handles Conditions~4 and~5, and
  Example~\ref{e:trivial} handles Condition~7.  In particular, we have a
  birational description for all of these smooth Hilbert schemes.
\end{remark}

%%%%%%%%%%%%%%%%%%%%%%%%%%%%%%%%%%%%%%%%%%%%%%%%%%%%%%%%%%%%%%%%%%%%%%%%%%%%%%
\section{General classification}

The final section completes our classification of smooth Hilbert schemes.  By
identifying enough singular points on Hilbert schemes, we prove that our list
of smooth Hilbert schemes is exhaustive.

\subsection*{Nearly lexicographic points}
We specify a novel point on a Hilbert scheme by perturbing a lexicographic
ideal.  Geometrically, this nearly lexicographic point corresponds to a
residual flag with an embedded point whose nilpotent elements do not lie in
the smallest linear subspace containing the residual flag.

\begin{lemma}
  \label{l:hil}
  Let $\lambda \coloneqq (\lambda_1, \lambda_2, \dotsc, \lambda_r)$ be an
  integer partition and set
  $p(t) \coloneqq \smash{\sum_{i=1}^{r} \binom{t + \lambda_i -i}{\lambda_i
      -1}}$.
  % \[
  %   p(t) \coloneqq \sum_{i=1}^{r} \binom{t + \lambda_i -i}{\lambda_i -1} \, .
  % \]
  Fix $m > \lambda_1$ and consider both the lexicographic ideal
  $L(\lambda)$ and the monomial ideal
  \[
    J \coloneqq \bigl\langle x_0^{}, \; x_1^{}, \; \dotsc, \;
    x_{n -\smash{\lambda_1} -2}^{}, \;
    x_{m - \smash{\lambda_1} -1}^{\, \smash{2}}, \;
    x_{m - \smash{\lambda_1}}^{}, \;
    x_{n -\smash{\lambda_1}+1}^{}, \; \dotsc, \; x_{n-1}^{}
    \bigr\rangle
  \]
  in the polynomial ring $R = k[x_0, x_1, \dotsc, x_{m}]$.  The closed
  subscheme in $\PP^{m}$ defined by the homogeneous ideal
  $K \coloneqq L(\lambda) \cap J$ has Hilbert polynomial $p+1$ and corresponds
  to a point on the lexicographic component of the Hilbert scheme
  $\Hilb^{p+1}(\PP^{m})$.
\end{lemma}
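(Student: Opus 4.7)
The plan has two parts: compute the Hilbert polynomial of $R/K$ using a short exact sequence, then exhibit a flat family placing $V(K)$ on the lexicographic component.

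First, I would apply the short exact sequence $0 \to R/K \to R/L(\lambda) \oplus R/J \to R/(L(\lambda)+J) \to 0$, giving $h_{R/K} = h_{R/L(\lambda)} + h_{R/J} - h_{R/(L(\lambda)+J)}$. The summand $R/L(\lambda)$ has Hilbert polynomial $p$ by Lemma~\ref{l:nonempty}(c); the quotient $R/J$ is isomorphic to $k[x_{m-\lambda_1-1}, x_m]/(x_{m-\lambda_1-1}^2)$ with constant Hilbert polynomial $2$; and Proposition~\ref{p:irr} shows that $L(\lambda)$ already contains $x_0, x_1, \ldots, x_{m-\lambda_1-1}$ (the first listed generators, whose exponents collapse to $1$ because $a_j = 0$ for $j > \lambda_1$). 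Consequently $L(\lambda) + J = (x_0, x_1, \ldots, x_{m-1})$ and $R/(L(\lambda)+J) = k[x_m]$ has Hilbert polynomial $1$, so $p_{R/K}(t) = p(t) + 2 - 1 = p(t) + 1$.

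Next, I would introduce the $1$-parameter family
\[
I_t \coloneqq \langle x_0, \ldots, x_{m-\lambda_1-2}, x_{m-\lambda_1-1}(x_{m-\lambda_1-1} - t x_m), x_{m-\lambda_1}, \ldots, x_{m-1} \rangle
\]
and set $K_t \coloneqq L(\lambda) \cap I_t$. Repeating the exact-sequence computation (using that $x_{m-\lambda_1-1} \in L(\lambda)$ absorbs the perturbation inside $L(\lambda) + I_t$) produces Hilbert polynomial $p+1$ for every $t$, so the family is flat over $\mathbb{A}^1$ with $K_0 = K$. For $t \neq 0$, the scheme $V(I_t)$ consists of two distinct reduced points, one lying on $V(L(\lambda))$ and one off it, so $V(K_t)$ is the lexicographic flag together with a single disjoint reduced point. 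This places $V(K)$ in the closure of the image of the rational map $\Phi \colon W \times \PP^m \dashrightarrow \Hilb^{p+1}(\PP^m)$, $(X, P) \mapsto X \cup P$, where $W$ is the lexicographic component of $\Hilb^p(\PP^m)$.

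To finish, I would compare dimensions: Corollary~\ref{formula} together with equation~\eqref{f:dim} give $\dim W + m$ equal to the dimension of the lexicographic component of $\Hilb^{p+1}(\PP^m)$, so the image closure of $\Phi$ is a full irreducible component. I claim it is the lexicographic component, for which it suffices to place the lexicographic point $V(L(\mu))$ with $\mu \coloneqq \lambda \cup (1)$ in this closure, since that point is smooth by Reeves--Stillman~\cite{RS}. An analogous flat family, built from the primary decomposition $L(\mu) = L(\lambda) \cap Q_1^\mu$ supplied by Proposition~\ref{p:irr}, degenerates a moving disjoint point into the embedded structure of $V(L(\mu))$ and supplies this point in the closure. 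The main obstacle is this last characteristic-free construction: in characteristic zero it is immediate from Galligo's theorem on generic initial ideals, but the general case demands an explicit degeneration tailored to the shape of $Q_1^\mu$.
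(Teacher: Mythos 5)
The first half of your proposal — the short exact sequence $0 \to R/K \to R/L(\lambda) \oplus R/J \to R/(L(\lambda)+J) \to 0$, the identification $L(\lambda)+J = \langle x_0, \dotsc, x_{m-1}\rangle$ via Proposition~\ref{p:irr}, and the count $p + 2 - 1 = p+1$ — is exactly the paper's computation and is correct.

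The second half is where there is a genuine gap, and you essentially name it yourself. Your plan reduces the claim to showing that the closure $Z$ of the image of $\Phi \colon W \times \PP^{m} \dashrightarrow \Hilb^{p+1}(\PP^{m})$ is the lexicographic component, and for that you need the lexicographic point $V\bigl(L(\lambda \cup (1))\bigr)$ to lie in $Z$; this is precisely the step you defer ("an explicit degeneration tailored to the shape of $Q_1^{\mu}$"), so the proof is not complete. The appeal to Galligo in characteristic zero does not rescue it: knowing that generic initial ideals are Borel-fixed does not by itself exhibit the lexicographic subscheme as a flat limit of unions $X \sqcup P$, and the lemma is needed over arbitrary fields. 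Two smaller points: the inference "the image closure has dimension $\dim W + m$, hence is a full irreducible component" is not valid as stated (an irreducible closed subset whose dimension matches that of some component may still sit inside a larger component; this is repairable once the lexicographic point is known to lie in $Z$, but then the dimension count is no longer doing the work), and the flatness of the family $K_t = L(\lambda) \cap I_t$ needs a word — constancy of Hilbert polynomials of fiberwise-defined ideals is not itself a flat family; one must check that the total family over $\mathbb{A}^1$ has the claimed fibers, in particular at $t=0$. By contrast, the paper's proof of this half is short and avoids deformations entirely: it observes from Proposition~\ref{p:irr} that $K$ is Borel-fixed and that saturating with respect to $x_{m-1}$ collapses $K$ back to the saturation of $L(\lambda)$, and then invokes Theorem~6 of Reeves \cite{Ree} (the radius of the Hilbert scheme) to conclude that the point defined by $K$ lies on the lexicographic component, uniformly in all characteristics. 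Your geometric picture (the embedded point smoothing off to a disjoint reduced point) is the right intuition — it is close in spirit to Proposition~\ref{birational} — but as a proof of this lemma it is missing its key step, which the citation of Reeves supplies in the paper.
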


\begin{proof}
  By Lemma~\ref{l:nonempty}, the Hilbert polynomial of the closed subscheme
  defined by the %lexicographic
  ideal $L(\lambda)$ is
  $\smash{p(t) = \sum_{i=1}^{r} \binom{t + \lambda_i -i}{\lambda_i -1}}$.
  Proposition~\ref{p:irr} implies that
  $L(\lambda) + J = \langle x_0, x_1, \dotsc, x_{m - 1} \rangle$.  For all
  integers $j$ greater than $1$, the sets
  $\{x_{m - \lambda_1 - 1} x_{m}^{\smash{j-1}}, x_n ^{\smash{j}} \}$ and
  $\{ x_{m}^{\, \smash{j}} \}$ form bases for the $j$-th homogeneous
  components of $R/J$ and $\smash{R/ \bigl( L(\lambda) + J \bigr)}$
  respectively.  It follows that their Hilbert polynomials are the constants
  $2$ and $1$.  From the short exact sequence of graded $R$-modules
  \[
    \begin{tikzcd}[row sep = -1pt, column sep = 17pt]
      0 \arrow[r]
      & \dfrac{R}{L(\lambda) \cap J} \arrow[r]
      & \dfrac{R}{L(\lambda)} \oplus \dfrac{R}{J} \arrow[r]
      & \dfrac{R}{L(\lambda)+J} \arrow[r]
      & 0 \, ,  
    \end{tikzcd}
  \]
  we deduce that $p+1$ is the Hilbert polynomial of the closed subscheme in
  $\PP^{m}$ corresponding to the monomial ideal $K = L(\lambda) \cap J$.

  Using Proposition~\ref{p:irr}, we also deduce that the saturation
  $\bigl( L(\lambda) \mathbin{:} x_{m - 1}^{\infty} \bigr)$ is equal to the
  saturation $(K \mathbin{:} x_{m-1}^{\infty})$.  Since both $L(\lambda)$
  and $K$ are Borel-fixed ideals, Theorem~6 in \cite{Ree} establishes that the
  point corresponding to the homogeneous ideal $K$ lies on the lexicographic
  component.
\end{proof}

\subsection*{Tangent spaces}
We show that these nearly lexicographic points are singular for a special
class of integer partitions.  The Zariski tangent space at the point in the
Hilbert scheme $\Hilb^{p}(\PP^m)$ corresponding to the closed subscheme $X$ in
$\PP^m$ with ideal sheaf $\mathcal{I}_X$ is naturally isomorphic to
$\Hom_{\PP^m}(\mathcal{I}_X, \mathcal{O}_X) =
\Hom_{X}^{}(\mathcal{I}_X^{}/\mathcal{I}_X^2, \mathcal{O}_X^{})$; see
\cite{Kollar}*{Theorem~2.8}.

\begin{lemma}
  \label{l:sing}
  Let $\lambda \coloneqq \bigl(\! (m-1)^{r-s-1}, (m - n)^{s+1} \bigr)$ be an
  integer partition where $r-2 \geqslant s \geqslant 0$ and
  $m - 1 \geqslant n \geqslant 2$.  Set
  $\smash{p(t) \coloneqq \sum_{i=1}^{r} \binom{t + \lambda_i-i}%
    {\lambda_i -1}}$. The Hilbert scheme $\Hilb^{p+1}(\PP^{m})$ is singular at
  the point corresponding to the saturated monomial ideal
  \begin{align*}
    K
    &\coloneqq x_0^{} \cdot \bigl\langle x_0^{}, x_1^{}, \dotsc, x_{m - 1}^{}
      \bigr\rangle + x_{1}^{\, \smash{r-s-1}} \cdot \bigl\langle x_{1}^{},
      x_{2}^{}, \dotsc, x_{\smash{n-1}}^{}, x_{\smash{n}}^{\,\smash{s+1}}
      \bigr\rangle
      = L(\lambda) \cap
      \bigl\langle x_{0}^{\, \smash{2}}, \; x_{1}^{}, x_{2}^{}, \dotsc, x_{m-1}^{}
      \bigr\rangle
  \end{align*}
  in the polynomial ring $R = k[x_0, x_1, \dotsc, x_{m}]$.
\end{lemma}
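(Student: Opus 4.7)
The plan is to show that the Zariski tangent space at $K$ has dimension strictly greater than the dimension of the lexicographic component containing $K$, which suffices for singularity at that point.

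First, I verify by a direct monomial computation using Proposition~\ref{p:irr} (with $a_{m-1} = r-s-1$, $a_{m-n} = s+1$, and all other $a_i = 0$) that the two displayed expressions for $K$ coincide: the intersection $L(\lambda) \cap \langle x_0^2, x_1, \ldots, x_{m-1}\rangle$ is computed via least common multiples of minimal generators, and every generator in $L(\lambda)$ other than $x_0$ itself already lies in $\langle x_1, \ldots, x_{m-1} \rangle$, so the intersection simplifies to the claimed expression. Specializing Lemma~\ref{l:hil} to $\lambda_1 = m-1$ (so that $m - \lambda_1 - 1 = 0$) identifies the ancillary ideal in that lemma with $\langle x_0^2, x_1, \ldots, x_{m-1}\rangle$; Lemma~\ref{l:hil} then yields that $K$ is saturated, the associated subscheme has Hilbert polynomial $p+1$, and the point $K$ lies on the lexicographic component of $\Hilb^{p+1}(\PP^m)$. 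Using equation~\eqref{f:dim} applied to the residual-flag type $(m-1, r-s-1), (m-n, s+1), (1, 1)$ associated to $\lambda \cup (1)$, with the third case applying when $s \geq 1$ and the fourth case when $s = 0$, I would then compute the dimension $D$ of this lexicographic component in closed form.

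The crux is to establish the strict inequality $\dim T_K \Hilb^{p+1}(\PP^m) > D$. A direct check using the Borel exchange property confirms that $K$ is Borel-fixed, so the tangent space $\Hom_R(K, R/K)_0$ decomposes into one-dimensional weight spaces under the standard torus action, each indexed by an admissible arrow from a minimal generator of $K$ to a monomial outside $K$ of the same degree. The embedded point contributes the new degree-two minimal generators $x_0^2, x_0 x_1, \ldots, x_0 x_{m-1}$; each admits an admissible arrow to $x_0 x_m \notin K$, and geometrically these arrows tilt the direction of the embedded point away from the $x_0$-axis, producing tangent directions not accounted for by deformations within the lex component. The main obstacle is carefully enumerating these admissible arrows and verifying their independence from the arrows contributing to $D$; a convenient bookkeeping benchmarks $\dim T_K \Hilb^{p+1}(\PP^m)$ against $\dim T_{L(\lambda)} \Hilb^{p}(\PP^m) + m$, the expected tangent dimension for adjoining a reduced point of $\PP^m$ to the lex subscheme of $\lambda$, and identifies the excess caused by the embedded point being forced to lie on a linear component of $V(L(\lambda))$ with direction aligned to a generator of $L(\lambda)$ rather than in general position.
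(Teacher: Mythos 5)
Your overall strategy is the same as the paper's: check the two descriptions of $K$ agree via Proposition~\ref{p:irr}, use Lemma~\ref{l:hil} to place $K$ on the lexicographic component, compute the component dimension from equation~\eqref{f:dim} (your case analysis for $s\geqslant 1$ versus $s=0$ is correct), and then beat that number with a lower bound on $\dim_k \Hom_R(K,R/K)_0$. The problem is that the last step --- the entire quantitative content of the lemma --- is not carried out. You write that ``the main obstacle is carefully enumerating these admissible arrows and verifying their independence,'' and then offer only a heuristic benchmark against $\dim T_{L(\lambda)}\Hilb^{p}(\PP^{m})+m$; this is a plan for a proof, not a proof. The paper's argument consists precisely of doing this enumeration explicitly: it writes down the Eliahou--Kervaire syzygy matrix $\Theta$ of the stable ideal $K$, exhibits explicit families of elements of $\operatorname{Ker}\bigl(\Hom_R(\Theta,R/K)\bigr)_0$ (the columns $x_i\mathbf{D}_0$, the products of $\mathbf{D}_1$ with degree-$(r-s-1)$ monomials, and two further block matrices), checks linear independence by comparing monomial entries, and verifies that the total count $N$ exceeds the bound \eqref{f:3} by exactly $n-1\geqslant 1$. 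Without some version of this count, nothing forces the tangent space to be larger than the lexicographic component.

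There is also a technical flaw in the mechanism you propose. The weight spaces of $\Hom_R(K,R/K)$ under the torus action are not one-dimensional in general, and a tangent vector is not a single ``arrow'' from one minimal generator to a standard monomial: a homomorphism must assign images to all minimal generators simultaneously, compatibly with the syzygies. In particular, the assertion that each new generator $x_0x_i$ ``admits an admissible arrow to $x_0x_m$'' does not by itself produce a tangent vector; one must check that the prescribed images extend over the whole generating set modulo the Eliahou--Kervaire relations, which is exactly what the paper's matrices $\mathbf{D}_0$, $\mathbf{D}_1$ and the subsequent explicit columns accomplish (using also \cite{RS}*{Lemma~3.1}, via the fact that $x_m$ divides no generator of $K$, to reduce to the degree-zero piece). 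As written, your argument establishes the setup but leaves the singularity claim unproved.
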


\begin{proof}
  From the monomial generators for the lexicographic ideal $L(\lambda)$
  appearing in Proposition~\ref{p:irr}, we see that the given monomials
  generate the ideal $K$.  It remains to show that the dimension of the
  Zariski tangent space at the nearly lexicographic point is larger than the
  dimension of the Zariski tangent space at the lexicographic point.
  Equation~\eqref{f:dim} establishes that the dimension of later is less than
  or equal to (with equality holding when $s > 0$)
  \begin{equation}
    \label{f:3}
    \binom{m + r - s - 2}{r - s - 1}
    + \binom{m - n + s + 1}{s + 1}
    +(m - n + 1)(n - 1)+ 2 m - 2  \, . 
  \end{equation}

  To estimate the dimension of the Zariski tangent space at the nearly
  lexicographic point, we examine the sheaf on $\PP^m$ corresponding to the
  graded $R$-module $\Hom_R(K,R/K)$.  Since the variable $x_m$ does not divide
  any of the generators of the ideal $K$, the dimension of this tangent space
  is greater than or equal to $\dim_k \Hom_R(K,R/K)_0$; see
  \cite{RS}*{Lemma~3.1}. Because $K$ is a stable monomial ideal, the
  Eliahou--Kervaire resolution~\cite{PeS}*{Theorem~2.3} yields a homogeneous
  free presentation.  The minimal syzygies among the generators of the ideal
  $K$ are given by the block matrix $\Theta \coloneqq
  \smash{\begin{bmatrix*}
      \textbf{A}_1 & \textbf{A}_2 & \!\!\!\dotsb\!\!\! & \textbf{A}_{m-1} &
      \textbf{B}_1 & \textbf{B}_2 & \!\!\!\dotsb\!\!\! &
      \textbf{B}_{n-1} & \textbf{C} \\
    \end{bmatrix*}}$ where%
  {%\small
    \begin{align*}
      \textbf{A}_i^{\!\!\smash{\textsf{T}}}
      &\coloneqq
        \begin{blockarray}{*{14}{c} c}
          \begin{block}{*{14}{>{$\tiny}c<{$}} c}
            \textcolor{gray}{$x_0^2 \!\!\!\!$} &
            \textcolor{gray}{$x_0^{} x_{1}^{} \!\!\!\!$} &
            \textcolor{gray}{$\dotsb\!\!\!\!$} &
            \textcolor{gray}{$x_0^{} x_{i-2}^{} \!\!\!\!\!\!\!\!$} &
            \textcolor{gray}{$x_0^{}x_{i-1}^{} \!\!\!\!\!\!\!\!$} &
            \textcolor{gray}{$x_0^{}x_{i}^{} \!\!\!\!\!\!\!\!$} &
            \textcolor{gray}{$x_0^{}x_{i+1}^{} \!\!\!\!$} &
            \textcolor{gray}{$\dotsb \!\!\!\!$} &
            \textcolor{gray}{$x_0^{}x_{m -1}^{} \!\!\!\!$} &
            \textcolor{gray}{$x_{1}^{r-s} \!\!\!\!$} &
            \textcolor{gray}{$x_{1}^{r-s-1}x_{1}^{} \!\!\!\!$} &
            \textcolor{gray}{$\dotsb \!\!\!\!$} &
            \textcolor{gray}{$x_{1}^{r-s-1}x_{n-1} \!\!\!\!$} &                        
            \textcolor{gray}{$x_{1}^{r-s-1}x_{n}^{s+1}\!\!\!$} &            
            \\[-1pt]
          \end{block}
          \begin{block}{[*{14}{c}]>{$\tiny}c<{$}} \\[-10pt]
            \;\; 0 \!\!\!\! & 0 \!\!\!\!&
            \dotsb\!\!\!\! & 0 \!\!\!\!\!\!\!\! & -x_i
            \!\!\!\!\!\!\!\!  & x_{i-1} \!\!\!\!\!\!\!\! & 0 \!\!\!\!&
            \dotsb \!\!\!\! & 0 \!\!\!\!& 0 \!\!\!\! &
            0 \!\!\!\! & \dotsb \!\!\!\! & 0 \!\!\!\!&
            0 & \textcolor{gray}{$i$} \\[-2pt]
            \;\; 0 \!\!\!\! & 0 \!\!\!\! & \dotsb
            \!\!\!\! & 0 \!\!\!\!\!\!\!\! & -x_{i+1} \!\!\!\! \!\!\!\!
            & 0 \!\!\!\!\!\!\!\! & x_{i-1} \!\!\!\! & \dotsb
            \!\!\!\! & 0 \!\!\!\! & 0 \!\!\!\! & 0 \!\!\!\! &
            \dotsb \!\!\!\! &
            0 \!\!\!\! & 0 & \textcolor{gray}{$i\!+\!1$} \\[-4pt]
            \;\; \vdots \!\!\!\! & \vdots \!\!\!\! & \ddots
            \!\!\!\! & \vdots \!\!\!\!\!\!\!\! & \vdots \!\!\!\!
            \!\!\!\!\!\!\!\! & \vdots \!\!\!\!\!\!\!\! & \vdots \!\!\!\!&
            \ddots \!\!\!\! & \vdots \!\!\!\! & \vdots
            \!\!\!\! & \vdots \!\!\!\! & \ddots \!\!\!\! &
            \vdots \!\!\!\! & \vdots & \textcolor{gray}{$\vdots$} \\[-2pt]
            \;\; 0 \!\!\!\! & 0 \!\!\!\! & \dotsb
            \!\!\!\! & 0 \!\!\!\!\!\!\!\! & -x_{m-1} \!\!\!\!\!\!\!\!
            & 0 \!\!\!\! \!\!\!\! & 0 \!\!\!\! & \dotsb
            \!\!\!\! & x_{i-1} \!\!\!\! & 0 \!\!\!\! & 0 \!\!\!\! &
            \dotsb\!\!\!\!
            & 0 \!\!\!\! & 0 & \textcolor{gray}{$m\!-\!1$} \\[3pt]
          \end{block} 
        \end{blockarray} \\[-10pt]
        % \end{align*}
        % \begin{align*}
     \textbf{B}_{\!j}^{\!\!\smash{\textsf{T}}}
      &\coloneqq
        \begin{blockarray}{*{13}{c} c}
          \begin{block}{*{13}{>{$\tiny}c<{$}} c}
            \textcolor{gray}{$\;\;x_0^2 \!\!\!\!$} &
            \textcolor{gray}{$x_0^{} x_{1}^{} \!\!\!\!$} &
            \textcolor{gray}{$\dotsb \!\!\!\!$} &
            \textcolor{gray}{$x_0^{} x_{m-1}^{} \!\!\!\!$} &
            \textcolor{gray}{$x_1^{r-s} \!\!\!\!$} &
            \textcolor{gray}{$\dotsb \!\!\!\!$} &
            \textcolor{gray}{$x_{1}^{r-s-1} x_{\!j-2}^{} \!\!\!\!$} &
            \textcolor{gray}{$x_{1}^{r-s-1} x_{\!j-1}^{} \!\!\!\!$} &
            \textcolor{gray}{$x_{1}^{r-s-1} x_{\!j}^{} \!\!\!\!$} &
            \textcolor{gray}{$x_{1}^{r-s-1} x_{\!j+1}^{} \!\!\!\!$} &
            \textcolor{gray}{$\dotsb \!\!\!\!$} &
            \textcolor{gray}{$x_{1}^{r-s-1}x_{n-1}^{} \!\!\!\!$} &
             \textcolor{gray}{$x_{1}^{r-s-1}x_{n}^{s+1}\!\!\!$} &
            \\[-1pt]
          \end{block}
          \begin{block}{[*{13}{c}]>{$\tiny}c<{$}} \\[-10pt]
            \;\; 0 \!\!\!\! & 0 \!\!\!\! & \dotsb
            \!\!\!\! & 0 \!\!\!\! & 0 \!\!\!\! &
            \dotsb \!\!\!\! & 0 \!\!\!\! & -x_{\!j}
            \!\!\!\! & x_{\!j-1} \!\!\!\! & 0 \!\!\!\! &
            \dotsb \!\!\!\! & 0 \!\!\!\! & 0 &
            \textcolor{gray}{$j$} \\[-2pt]
            \;\; 0 \!\!\!\! & 0 \!\!\!\! & \dotsb
            \!\!\!\! & 0 \!\!\!\! & 0 \!\!\!\! &
            \dotsb \!\!\!\! & 0 \!\!\!\! & -x_{\!j+1}
            \!\!\!\! & 0 \!\!\!\! & x_{\!j-1} \!\!\!\! &
            \dotsb \!\!\!\! & 0 \!\!\!\! & 0 &
            \textcolor{gray}{$j\!+\!1$} \\[-4pt]
            \;\; \vdots \!\!\!\! & \vdots \!\!\!\! & \ddots
            \!\!\!\! & \vdots \!\!\!\! & \vdots \!\!\!\! &
            \ddots\!\!\!\! & \vdots \!\!\!\! & \vdots
            \!\!\!\! & \vdots \!\!\!\! & \vdots \!\!\!\! &
            \ddots \!\!\!\! & \vdots \!\!\!\! & \vdots &
            \textcolor{gray}{$\vdots$} \\[-2pt]
            \;\; 0 \!\!\!\! & 0 \!\!\!\! & \dotsb
            \!\!\!\! & 0 \!\!\!\! & 0 \!\!\!\! &
            \dotsb \!\!\!\! & 0 \!\!\!\! & -x_{n-1}
            \!\!\!\! & 0 \!\!\!\! & 0 \!\!\!\! &
            \dotsb \!\!\!\! & x_{\!j-1} \!\!\!\! & 0 &
            \textcolor{gray}{$n\!-\!1$} \\[3pt]
          \end{block} 
        \end{blockarray} \\[-10pt]
        % \end{align*}
        % \begin{align*}
     \textbf{C}^{\!\smash{\textsf{T}}}
      &\coloneqq
        \begin{blockarray}{*{13}{c} c}
          \begin{block}{*{13}{>{$\tiny}c<{$}} c}
            \textcolor{gray}{$\;\;x_0^2 \!\!\!\!$} &
            \textcolor{gray}{$x_0^{} x_{1}^{} \!\!\!\!$} &
            \textcolor{gray}{$\!\!\!\!\dotsb \!\!\!\!\!\!\!\!$} &
            \textcolor{gray}{$x_0^{} x_{n-1}^{} \!\!\!\!$} &            
            \textcolor{gray}{$x_{0}^{} x_{n}^{} \!\!\!\!$} &
            \textcolor{gray}{$x_0^{} x_{n+1}^{} \!\!\!\!$} &                
            \textcolor{gray}{$\!\!\!\!\dotsb \!\!\!\!\!\!\!\!$} &
            \textcolor{gray}{$x_0^{} x_{m-1}^{} \!\!\!\!$} &              
            \textcolor{gray}{$x_{1}^{r-s} \!\!\!\!$} &
            \textcolor{gray}{$x_{1}^{r-s-1} x_{2}^{} \!\!\!\!$} &
            \textcolor{gray}{$\!\!\!\!\dotsb \!\!\!\!\!\!\!\!$} &            
            \textcolor{gray}{$x_{1}^{r-s-1} x_{n-1}^{} \!\!\!\!$} &
            \textcolor{gray}{$\!\!x_{1}^{r-s-1}x_{n}^{s+1}\!\!\!$} &
            \\[-1pt]
          \end{block}
          \begin{block}{[*{13}{c}]>{$\tiny}c<{$}} \\[-10pt]
            \;\; 0 \!\!\!\!& 0 \!\!\!\!& \!\!\!\!\dotsb
            \!\!\!\!\!\!\!\! & 0 \!\!\!\!& \!\!\!\! -x_{1}^{r-x-1} 
            x_{n}^{s} \!\!\!\! \!\!\!\! & 0 \!\!\!\! &
            \!\!\!\!\dotsb\!\!\!\! \!\!\!\! & 0 \!\!\!\! & 0 \!\!\!\!
            & 0 \!\!\!\! & \!\!\!\!\dotsb\!\!\!\! \!\!\!\! & 0
            \!\!\!\! & x_{0}^{} & \textcolor{gray}{$0$}
            \\[-2pt]
            \;\; 0 \!\!\!\!& 0 \!\!\!\!& \!\!\!\!\dotsb
            \!\!\!\!\!\!\!\! & 0 \!\!\!\! & 0 \!\!\!\! & 0 \!\!\!\! &
            \!\!\!\!\dotsb \!\!\!\!\!\!\!\! & 0 \!\!\!\! & \!\!\!\!
            -x_{n}^{s+1} \!\!\!\! \!\!\!\! & 0 \!\!\!\! &
            \!\!\!\!\dotsb \!\!\!\!\!\!\!\! & 0 \!\!\!\!& x_{1} &
            \textcolor{gray}{$1$} \\[-2pt]
            \;\; 0 \!\!\!\! & 0 \!\!\!\! & \!\!\!\!\dotsb
            \!\!\!\!\!\!\!\! & 0 \!\!\!\! & 0 \!\!\!\! & 0 \!\!\!\! &
            \!\!\!\!\dotsb \!\!\!\!\!\!\!\! & 0 \!\!\!\! & 0 \!\!\!\!
            & \!\!\!\! -x_{n}^{s+1} \!\!\!\! \!\!\!\! & \!\!\!\!\dotsb
            \!\!\!\!\!\!\!\! & 0 \!\!\!\! & x_{2} &
            \textcolor{gray}{$2$} \\[-4pt]
            \;\; \vdots \!\!\!\! & \vdots \!\!\!\! &
            \!\!\!\!\ddots\!\!\!\!\!\!\!\! & \vdots \!\!\!\! & \vdots
            \!\!\!\! & \vdots \!\!\!\! &
            \!\!\!\!\ddots \!\!\!\!\!\!\!\! & \vdots \!\!\!\! & \vdots
            \!\!\!\! & \vdots \!\!\!\! & \!\!\!\!\ddots
            \!\!\!\!\!\!\!\!  & \vdots \!\!\!\! & \vdots &
            \textcolor{gray}{$\vdots$} \\[-2pt]
            \;\; 0 \!\!\!\! & 0 \!\!\!\! & \!\!\!\!\dotsb
            \!\!\!\!\!\!\!\! & 0 \!\!\!\! & 0 \!\!\!\! & 0 \!\!\!\! &
            \!\!\!\!\dotsb \!\!\!\!\!\!\!\! & 0 \!\!\!\! & 0 \!\!\!\!
            & 0 \!\!\!\! & \!\!\!\!\dotsb \!\!\!\!\!\!\!\! & \!\!\!\!
            -x_{n}^{s+1} \!\!\!\! \!\!\!\!  & x_{n-1} &
            \textcolor{gray}{$m\!-\!1$} \\[3pt]
          \end{block} 
        \end{blockarray} \, .
      \\[-30pt]      
    \end{align*}}%
  for all $1 \leqslant i \leqslant m-1$ and all $1 \leqslant j \leqslant n-1$.
  It follows that
  $\Hom_R(K,R/K) = \operatorname{Ker} \bigl( \Hom_R(\Theta, R/K) \!\bigr)$.

  The two $(m+n) \times 1$-matrices defined by%
  {%\small
    \begin{align*}
      \begin{blockarray}{r *{10}{c} c}
        \begin{block}{r *{10}{>{$\tiny}c<{$}} c}
          & \textcolor{gray}{$\;\; x_0^2$} &
          \textcolor{gray}{$x_0^{}x_{1}^{}$} &
          \textcolor{gray}{$x_0^{}x_{1}^{}$} &
          \textcolor{gray}{$\dotsb$} &
          \textcolor{gray}{$x_0^{}x_{m-1}^{}$} &
          \textcolor{gray}{$x_1^{r-s-1}$} &
          \textcolor{gray}{$x_1^{r-s-1}x_{2}^{}$} &
          \textcolor{gray}{$\dotsb$} &
          \textcolor{gray}{$x_1^{r-s-1}x_{n-1}^{}$} &
          \textcolor{gray}{$x_1^{r-s-1}x_{n}^{s+1}$} &          
          \\[-1pt]
        \end{block}      
        \begin{block}{r [*{10}{c}]>{$\tiny}c<{$}} \\[-13pt]
          \textbf{D}_{0}^{\!\smash{\textsf{T}}} \coloneqq & \;\; x_0 & x_1 &
          x_2 & \dotsb & x_n & 0 & 0 & \dotsb & 0 & 0 &
          \textcolor{gray}{$0$} \\[1pt]         
        \end{block} \\[-7pt]
        \begin{block}{r *{10}{>{$\tiny}c<{$}} c}
          & \textcolor{gray}{$\;\; x_0^2$} &
          \textcolor{gray}{$x_0^{}x_{1}^{}$} &
          \textcolor{gray}{$x_0^{}x_{1}^{}$} &
          \textcolor{gray}{$\dotsb$} &
          \textcolor{gray}{$x_0^{}x_{m-1}^{}$} &
          \textcolor{gray}{$x_1^{r-s-1}$} &
          \textcolor{gray}{$x_1^{r-s-1}x_{2}^{}$} &
          \textcolor{gray}{$\dotsb$} &
          \textcolor{gray}{$x_1^{r-s-1}x_{n-1}^{}$} &
          \textcolor{gray}{$x_1^{r-s-1}x_{n}^{s+1}$} &          
          \\[-1pt]
        \end{block}     
        \begin{block}{r [*{10}{c}]>{$\tiny}c<{$}} \\[-13pt]
          \textbf{D}_{1}^{\!\smash{\textsf{T}}} \coloneqq & \;\; 0 & 0 & 0 &
          \dotsb & 0 & x_1^{} & x_2^{} & \dotsb & x_{n-1}^{} & x_{n}^{s+1} &
          \textcolor{gray}{$1$} \\[1pt]
        \end{block}         
      \end{blockarray}  \\[-30pt]
    \end{align*}}%
  satisfy $\Theta^{\!\smash{\textsf{T}}} \, \mathbf{D}_{0} = \mathbf{0}$ and
  $\Theta^{\!\smash{\textsf{T}}} \, \mathbf{D}_{1} = \mathbf{0}$.  Thus, for
  all $2 \leqslant i \leqslant m$, the column in the product
  $x_{i} \, \mathbf{D}_0$ represents a nonzero element in
  $\Hom_R(K, R/K)_0$.  The column in the product of the matrix $\mathbf{D}_1$
  with any monomial of degree $r-s-1$ in the variables $x_1, x_2, \dotsc, x_m$
  (excluding $x_1^{r-s-1}$) also represents a nonzero element in
  $\Hom_R(K, R/K)_0$.  There are $(m-1) + \binom{m+r-s-2}{r-s-1}-1$ columns of
  these products.  Since all entries in the product $x_0 \, \Theta$ lie in the
  ideal $K$, the $m + n$ columns of the square matrix%
  {%\small
    \begin{align*}
      \begin{blockarray}{*{9}{c} c}
        \begin{block}{*{9}{>{$\tiny}c<{$}} c}
          \textcolor{gray}{$1$} &
          \textcolor{gray}{$2$} &
          \textcolor{gray}{$\!\!\!\!\dotsb\!\!\!\!$} &
          \textcolor{gray}{$m$} &            
          \textcolor{gray}{$1$} &
          \textcolor{gray}{$2$} &                
          \textcolor{gray}{$\!\!\!\!\dotsc\!\!\!\!$} &
          \textcolor{gray}{$n-1$} &              
          \textcolor{gray}{$n$} &
          \\[-1pt]
        \end{block}
        \begin{block}{[*{9}{c}]>{$\tiny}c<{$}} \\[-10pt]
          x_0 x_m &  0 & \!\!\!\!\dotsb\!\!\!\! & 0 & 0 & 0 &
          \!\!\!\!\dotsb\!\!\!\! & 0 & 0 & \textcolor{gray}{$x_0^2$} \\[-2pt]
          0 &  x_0 x_m & \!\!\!\!\dotsb\!\!\!\! & 0 & 0 & 0 & \!\!\!\!\dotsb
          \!\!\!\! & 0 & 0 & \textcolor{gray}{$x_0x_1$} \\[-4pt]
          \vdots & \vdots & \!\!\!\!\ddots\!\!\!\! & \vdots & \vdots & \vdots
          & \!\!\!\!\ddots \!\!\!\! & \vdots & \vdots &
          \textcolor{gray}{$\vdots$} \\[-2pt]
          0 & 0 & \!\!\!\!\dotsb\!\!\!\! & x_0 x_m & 0 & 0 & \!\!\!\!\dotsb
          \!\!\!\! & 0 & 0 & \textcolor{gray}{$x_0 x_{m-1}$} \\[-2pt]
          0 & 0 & \!\!\!\!\dotsb \!\!\!\! & 0 & x_0^{} x_n^{r-s-1} \!\!\!\! &
          0 & \!\!\!\!\dotsb\!\!\!\! & 0 & 0 & \textcolor{gray}{$x_1^{r-s}$}
          \\[-2pt]
          0 & 0 & \!\!\!\!\dotsb \!\!\!\! & 0 & 0 & x_0^{} x_n^{r-s-1}
          \!\!\!\! & \!\!\!\!\dotsb\!\!\!\! & 0 & 0
          & \textcolor{gray}{$x_1^{r-s-1} x_2^{}$} \\[-4pt]
          \vdots & \vdots & \!\!\!\!\ddots\!\!\!\! & \vdots & \vdots & \vdots
          & \!\!\!\!\ddots \!\!\!\! & \vdots & \vdots &
          \textcolor{gray}{$\vdots$} \\[-2pt]
          0 &  0 & \!\!\!\!\dotsb \!\!\!\! & 0 & 0 & 0 & \!\!\!\!\dotsb
          \!\!\!\! & x_0^{} x_n^{r-s-1} \!\!\!\! & 0 & \textcolor{gray}{$x_1^{r-s-1}
            x_{n-1}^{}$} \\[-2pt]
          0 & 0 & \!\!\!\!\dotsb \!\!\!\! & 0 & 0 & 0 & \!\!\!\!\dotsb
          \!\!\!\! & 0 & x_0^{} x_m^{r-1} &
          \textcolor{gray}{$x_1^{r-s-1}x_{n}^{s+1}$} \\[3pt]
         \end{block} 
       \end{blockarray}
      \\[-30pt]    
    \end{align*}}%
  represent nonzero elements in $\Hom_R(K, R/K)_0$.  Similarly, all of entries
  in the bottom $n$ rows of the matrix $\Theta$ when multiplied by the
  monomial $x_1^{r-s-1}$ lie in the ideal $K$.  Hence, for all
  $n \leqslant j \leqslant m$, the columns of the matrices%
  {%\small
    \begin{align*}
      &
      \begin{blockarray}{*{4}{c} c}
        \begin{block}{*{4}{>{$\tiny}c<{$}} c}
          \textcolor{gray}{$1 \!\!\!\!$} &
          \textcolor{gray}{$2 \!\!\!\!$} &                
          \textcolor{gray}{$\!\!\dotsb\!\!\!\!\!\!$} &
          \textcolor{gray}{$n-1$} &              
          \\[-1pt]
        \end{block}
        \begin{block}{[*{4}{c}]>{$\tiny}c<{$}} \\[-10pt]
          0 \!\!\!\! & 0 \!\!\!\!& \!\!\dotsb \!\!\!\!\!\! & 0 &
          \textcolor{gray}{$x_0^2$} \\[-2pt]
          0 \!\!\!\! & 0 \!\!\!\! & \!\!\dotsb \!\!\!\!\!\! & 0 &
          \textcolor{gray}{$x_0x_1$} \\[-4pt]
          \vdots \!\!\!\! & \vdots \!\!\!\! & \!\!\ddots \!\!\!\!\!\! & \vdots
          & \textcolor{gray}{$\vdots$} \\[-2pt]
          0 \!\!\!\! & 0 \!\!\!\! & \!\!\dotsb \!\!\!\!\!\! & 0
          & \textcolor{gray}{$x_0 x_{m-1}$} \\[-2pt]
          x_1^{r-s-1} x_j^{} \!\!\!\! & 0 \!\!\!\! & \!\!\dotsb\!\!\!\!\!\! & 0 
          & \textcolor{gray}{$x_1^{r-s}$} \\[-2pt]
          0 \!\!\!\! & x_1^{r-s-1} x_j^{} \!\!\!\! & \!\!\dotsb
          \!\!\!\!\!\! & 0 & \textcolor{gray}{$x_1^{r-s-1} x_2^{}$} \\[-4pt]
          \vdots \!\!\!\! & \vdots \!\!\!\! & \!\!\ddots \!\!\!\!\!\!
          & \vdots  & \textcolor{gray}{$\vdots$} \\[-2pt]
          0 \!\!\!\! & 0 \!\!\!\! & \!\!\dotsb \!\!\!\!\!\! & x_1^{r-s-1} x_j^{}
          & \textcolor{gray}{$x_1^{r-s-1} x_{n-1}^{}$} \\[-2pt]
          0 \!\!\!\! & 0 \!\!\!\! & \!\!\dotsb \!\!\!\!\!\! & 0 &
          \textcolor{gray}{$x_1^{r-s-1}x_{n}^{s+1}$} \\[4pt]
        \end{block} 
      \end{blockarray} 
      \!\!\!\! , \!\!
      &\hspace{2pt}&
      \begin{blockarray}{*{4}{c} c}
        \begin{block}{*{4}{>{$\tiny}c<{$}} c}
          \textcolor{gray}{$1$} &
          \textcolor{gray}{$2$} &                
          \textcolor{gray}{$\dotsb$} &
          \textcolor{gray}{$\!\!\!\!\!\!\binom{m-n+s+1}{s+1}\!-\!1$} &              
          \\[-1pt]
        \end{block}
        \begin{block}{[*{4}{c}]>{$\tiny}c<{$}} \\[-10pt]
          0 & 0 & \dotsb & 0 & \textcolor{gray}{$x_0^2$}
          \\[-2pt]
          0 & 0 & \dotsb & 0 & \textcolor{gray}{$x_0x_1$}
          \\[-4pt]
          \vdots & \vdots & \ddots & \vdots & 
          \textcolor{gray}{$\vdots$} \\[-2pt]
          0 & 0 & \dotsb & 0
          & \textcolor{gray}{$x_0 x_{m-1}$} \\[-2pt]
          0 & 0 & \dotsb & 0 
          & \textcolor{gray}{$x_1^{r-s}$} \\[-2pt]
          0 & 0 & \dotsb & 0
          & \textcolor{gray}{$x_1^{r-s-1} x_2^{}$} \\[-4pt]
          \vdots & \vdots & \ddots & \vdots 
          & \textcolor{gray}{$\vdots$} \\[-2pt]
          0 & 0 & \dotsb & 0
          & \textcolor{gray}{$x_1^{r-s-1} x_{n-1}^{}$} \\[-2pt]
          x_1^{r-s-1} x_{n\vphantom{1}}^{s} x_{n+1}^{} \!\!\!\!
          & x_1^{r-s-1} x_{n\vphantom{1}}^{s} x_{n+2}^{} \!\!\!\!\!\!
          & \dotsb & \!\!\!\!\!\! x_1^{r-s-1} x_{m}^{s+1} &
          \textcolor{gray}{$x_1^{r-s-1}x_{n}^{s+1}$} \\[4pt]
        \end{block} 
      \end{blockarray}
      \hspace{-6pt}
      \\[-30pt]    
    \end{align*}}%
  represent nonzero elements in $\Hom_R(K, R/K)_0$.  Each nonzero entry in
  the bottom row of the second matrix is the product of $x_1^{r-s-1}$ and a
  monomial of degree $s+1$ in the variables $x_n , x_{n+1}, \dotsc, x_m$
  (excluding $x_n^{s+1}$).  Hence, there are $\binom{m-n+s+1}{s+1} -1$ columns
  in this matrix.
  
  Our total number of distinct columns representing nonzero elements in
  $\Hom_R(K, R/K)_0$ is
  \begin{align*}
    N &\coloneqq (m-1) + \binom{m+r-s-2}{r-s-1} -1 + (m+n) + (m-n+1)(n-1) 
      + \binom{m-n+s+1}{s+1} -1 \, . 
  \end{align*}
  By comparing their nonzero monomial entries, we see that these $N$ columns
  are linearly independent. The difference between the number $N$ and
  \eqref{f:3} is $n-1$. As $n \geqslant 2$, we conclude that the Hilbert
  scheme is singular at the point corresponding to the monomial ideal $K$.
\end{proof}

\begin{proposition}
  \label{p:sing}
  Let $\lambda \coloneqq (\lambda_1, \lambda_2, \dotsc, \lambda_r)$ be an
  integer partition such that $\lambda \cup (1)$ has at least three distinct
  parts or $\lambda \coloneqq (\smash{\lambda_1^{r-s-1}},1^{s+1})$ where
  $r-2 \geqslant s \geqslant 0$ and $\lambda_1 > 1$.  Fix $m > \lambda_1$, set
  $\smash{p(t) \coloneqq \sum_{i=1}^{r} \binom{t + \lambda_i -i}{\lambda_i
      -1}}$, and consider the monomial ideal
  \[
    J \coloneqq \bigl\langle x_0^{}, \; x_1^{}, \; \dotsc, \;
    x_{m - \smash{\lambda_1} -2}^{}, \;
    x_{m - \smash{\lambda_1} -1}^{\, \smash{2}}, \;
    x_{m - \smash{\lambda_1}}^{}, \;
    x_{m - \smash{\lambda_1}+1}^{}, \; \dotsc, \;
    x_{m - 1}^{} \bigr\rangle \, . 
  \]
  in the polynomial ring $R = k[x_0,x_1, \dotsc, x_{m}]$. The Hilbert scheme
  $\Hilb^{p+1}(\PP^{m})$ is singular at the point corresponding to the
  saturated monomial ideal $K \coloneqq L(\lambda) \cap J$.
\end{proposition}

\begin{proof}
  Lemma~\ref{l:hil} shows that the nearly lexicographic point lies on the
  lexicographic component of the Hilbert scheme $\Hilb^{p+1}(\PP^{m})$.  We
  reduce the analysis to a special case.

  The inclusion
  $\langle x_0, x_1, \dotsc, x_{m - \smash{\lambda_1}-2} \rangle \subset K$
  implies that the nearly lexicographic point is contained in a
  $(\lambda_1 + 1)$-plane.  Hence, we may assume that $m = \lambda_1 + 1$.

  Assume that $\lambda \cup (1)$ has at least three distinct parts.  The
  lexicographic ideal $L\!\bigl( \lambda \cup (1) \!\bigr)$ is the flat limit
  of a one-parameter family whose general member is the sum of the
  lexicographic ideal $L(\lambda)$ and the ideal of a disjoint point.  Since
  the dimension of the Zariski tangent space at a point in family is an
  upper-semicontinuous function, we may also assume that $\lambda_r > 1$.
  Applying Corollary~\ref{c:lex}, the lexicographic ideal
  $L(\lambda \cup (1))$ determines a residual flag
  $\varnothing \subset X_{e} \subset X_{e-1} \subset \dotsb \subset X_1$ in
  $\PP^{m}$. The hypotheses ensure that $e \geqslant 3$.  The closed subscheme
  $X_{e-2}$ lies in some linear space $\Lambda \subseteq \PP^{m}$.  We may
  deform the scheme $X_{e-2}$ in the linear space $\Lambda$ and leave the rest
  of the residual flag $X_{e-3} \subset X_{e-4} \subset \dotsb \subset X_1$
  unchanged.  If the closed scheme $X_{e-2}$ corresponds to a singular point
  on the Hilbert scheme in $\Lambda$, then it follows that the closed scheme
  $X_1$ corresponds to a singular point on the Hilbert scheme.  Thus, we may
  assume that $e = 3$.

  With these reductions, it suffices to consider the integer partition
  $\lambda = \bigl( (m-1)^{r-s-1}, (m - n)^{s+1} \bigr)$ where
  $r - 2 \geqslant s \geqslant 0$ and $m - 1 \geqslant n \geqslant 2$. In this
  special case, Lemma~\ref{l:sing} proves that the dimension of the Zariski
  tangent space at the nearly lexicographic point exceeds the dimension of the
  lexicographic component.  Therefore, the Hilbert scheme
  $\Hilb^{p+1}(\PP^{m})$ is singular at the point corresponding to the
  saturated monomial ideal $K \coloneqq L(\lambda) \cap J$.
\end{proof}

\subsection*{Other singular examples}
In addition to our family of singular Hilbert schemes, the classification of
smooth Hilbert schemes relies on three other singular families.

\begin{example}
  \label{e:gaps}
  Two familiar Hilbert schemes explain the curious gap between Conditions~4
  and~5 in Proposition~\ref{p:birat}.  By appealing to the splitting in
  \eqref{f:split}, it is enough to understand integer partitions $(2,1)$,
  $(2^2, 1)$ and $(2^3, 1)$. The first of these is already covered by both
  Theorem~\ref{t:resH} and Example~\ref{e:pts}.  In contrast, the Hilbert
  schemes in the other two cases are singular.

  The integer partition $(2^2, 1)$ is associated to the Hilbert polynomial
  $2t+2$. The Hilbert scheme $\Hilb^{2t+2}(\PP^{m})$ is singular; it has two
  irreducible components.  A general point on one component corresponds to a
  pair of skew lines and a general point on the other corresponds to the union
  of a plane conic and an isolated point; compare with
  \cite{CCN}*{Theorem~1.1}.
  
  The integer partition $(2^3, 1)$ is associated to the Hilbert polynomial
  $3t+1$. The Hilbert scheme $\Hilb^{3t+1}(\PP^{m})$ is again singular
  because it has two irreducible components.  A general point in first
  component corresponds to a twisted cubic curve and a general point in the
  other corresponds to the union of a plane cubic and an isolated points;
  compare with \cite{PS}*{Theorem}.
  % or \cite{HSS}*{Subsection~5.2}.
\end{example}

\begin{example}
  \label{e:4pts}
  For completeness, we also add an explicit description of another well-known
  singular Hilbert scheme.  For any nonnegative integer $s$, consider the
  integer partition $\lambda = (1^{s+4})$ whose associated Hilbert polynomial
  is the constant $s+4$.  For all $m \geqslant 3$, the Hilbert scheme
  $\Hilb^{s+4}(\PP^{m})$ is singular at the point corresponding to the
  saturated homogeneous ideal
  \[
    B(s) \coloneqq \bigl\langle x_0^{}, \; x_1^{}, \; \dotsc, \;
    x_{m - 4}^{}, \; x_{m - 3}^{\,\smash{2}}, \;
    x_{m - 3}^{} \, x_{m - 2}^{}, \;
    x_{m - 3}^{} \, x_{m - 1}^{}, \;
    x_{m - 2}^{\,\smash{2}}, \;
    x_{m - 2}^{} \, x_{m - 1}^{}, \;
    x_{m - 1}^{\,\smash{s+2}} \bigr\rangle
  \]
  in the polynomial ring $R = k[x_0, x_1, \dotsc, x_{m}]$; compare with
  \cite{Che}*{Lemma~1.4}.
\end{example}

\begin{theorem}
  %\label{t:gthm}
  Let $E$ be a locally free sheaf on a locally noetherian scheme $S$ of
  constant rank $m+1$ and let $p$ be a numerical polynomial. The Hilbert
  scheme $\Hilb^{p}({\PP(E)})$ is smooth and irreducible over $S$ if and only
  if there exists an integer partition
  $\lambda = (\lambda_1, \lambda_2, \dotsc, \lambda_r)$ such that
  $p(t) = \sum_{i=1}^{r} \binom{t + \lambda_i - i}{\lambda_i -1}$ and one of
  the following seven conditions holds:
  \begin{compactenum}[\upshape (1)]
  \item $m = 2 \geqslant \lambda_1$
  \item $m \geqslant \lambda_1$ and $\lambda_r \geqslant 2$, 
  \item $\lambda = (1)$ or $\lambda = (m^{r-2}, \lambda_{r-1}, 1)$ where
    $r \geqslant 2$ and $m \geqslant \lambda_{r-1} \geqslant 1$,
  \item $\lambda = (m^{r-s-3}, \lambda_{r-s-2}^{s+2}, 1)$ where
    $r - 3 \geqslant s \geqslant 0$ and
    $m - 1 \geqslant \lambda_{r-s-2} \geqslant 3$,
  \item $\lambda = (m^{r-s-5}, 2^{s+4}, 1)$ where
    $r-5 \geqslant s \geqslant 0$,
  \item $\lambda = (m^{r-3}, 1^3)$ where $r \geqslant 3$,
  \item $\lambda = (m+1)$ or $r = 0$.
  \end{compactenum}  
\end{theorem}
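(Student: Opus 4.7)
The two directions of the equivalence are handled separately.

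\textbf{Sufficiency.} This direction was already compiled in Remark~\ref{smooth cases}: Example~\ref{points} handles conditions~(1) and~(6), Theorem~\ref{ResHilb} handles~(2) and~(3), Proposition~\ref{birational} handles~(4) and~(5), and Example~\ref{e:trivial} handles~(7).

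\textbf{Necessity.} Assume none of the seven conditions hold; the plan is to produce a singular point on $\Hilb^p(\PP^m)$. First, invoke Lemma~\ref{local} to reduce to $S = \Spec(k)$ with $k$ algebraically closed. The failures of~(1) and~(7) give $m \geqslant 3$, $r \geqslant 1$, and $\lambda_1 \leqslant m$; the failure of~(2) forces $\lambda_r = 1$. Writing $\lambda = (m^s, \nu)$ with $\nu_1 < m$ (possibly $\nu$ is pure), the splitting~\eqref{splitting} identifies $\Hilb^p(\PP^m)$ with $\PP\bigl(\Sym^s(E^*)\bigr) \times \Hilb^{q_2}(\PP^m)$, so smoothness is equivalent to smoothness of the Hilbert scheme associated with $\nu$. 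We may thus assume $\lambda_1 < m$ throughout.

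The argument now branches on the structure of the reduced $\lambda$. If $\lambda$ has at least three distinct parts, then so does $\lambda \cup (1)$, so applying Proposition~\ref{sing Hilb} to the truncation $\lambda^\circ \coloneqq (\lambda_1, \dotsc, \lambda_{r-1})$ produces a singular nearly lexicographic point. If $\lambda = (1^u)$, the failures of~(3) and~(6) force $u \geqslant 4$, and Example~\ref{4 pts} exhibits the singular monomial ideal $B(u - 4)$. Otherwise $\lambda = (a^k, 1^u)$ with $2 \leqslant a \leqslant m - 1$, $k \geqslant 1$, and $u \geqslant 1$. For $u = 1$, a direct check shows that $k = 1$ invokes~(3), $k \geqslant 2$ with $a \geqslant 3$ invokes~(4), and $k \geqslant 4$ with $a = 2$ invokes~(5), so the only uncovered $u = 1$ cases are $(2^2, 1)$ and $(2^3, 1)$, which Example~\ref{gaps} handles. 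The remaining family is $u \geqslant 2$.

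The chief obstacle is this final family, $\lambda = (a^k, 1^u)$ with $u \geqslant 2$, $k \geqslant 1$, and $2 \leqslant a \leqslant m - 1$. Since $\lambda \cup (1)$ has only two distinct parts, Proposition~\ref{sing Hilb} does not apply directly. My plan is to adapt Lemma~\ref{l:sing}: starting from $L(\lambda)$ written via its irredundant irreducible decomposition of Proposition~\ref{p:irr}, construct a nearly lexicographic ideal $K = L(\lambda) \cap J$ where $J$ displaces a single isolated point out of the $a$-plane housing the residual flag (this is in the spirit of Lemma~\ref{l:hil}, with an $x_{m - a - 1}^{2}$ replacing the linear generator). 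Next, compute $\dim_k \Hom_R(K, R/K)_0$ via the Eliahou--Kervaire resolution, writing out the syzygy matrix in block form analogous to the matrix $\Theta$ in Lemma~\ref{l:sing}. The expected count of linearly independent degree-zero homomorphisms will strictly exceed the lexicographic-component dimension from~\eqref{f:dim} by at least $u - 1$, certifying singularity at $K$ and completing the classification.
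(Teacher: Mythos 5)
Your reductions and the first three branches track the paper's own argument exactly: Lemma~\ref{local} to pass to an algebraically closed field, the splitting~\eqref{splitting} to force $m > \lambda_1$, Example~\ref{4 pts} for $\lambda = (1^u)$ with $u \geqslant 4$, Example~\ref{gaps} for $(2^2,1)$ and $(2^3,1)$, and Proposition~\ref{sing Hilb} applied to the truncation when $\lambda$ has at least three distinct parts. The problem is the family you yourself single out, $\lambda = (a^k,1^u)$ with $u \geqslant 2$ and $2 \leqslant a \leqslant m-1$: there the proposal stops being a proof. You posit a nearly lexicographic ideal $K = L(\lambda^\circ) \cap J$ in the spirit of Lemma~\ref{l:hil} and announce that an Eliahou--Kervaire computation \emph{will} yield an excess of at least $u-1$ over the lexicographic-component dimension, but nothing is computed. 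Adapting Lemma~\ref{l:sing} is not routine here: its hypotheses force both distinct parts of the truncated partition to be at least $2$ (that is, $m-2 \geqslant n \geqslant 2$), the block matrix $\Theta$ and the column-counting argument are built on that assumption, and once trailing $1$'s appear both the generators of $K$ and the relevant branch of~\eqref{f:dim} change (the $n_e = 1$ cases acquire the term $n_0\,d_e$), so the asserted excess is a conjecture rather than a verified inequality. Until that tangent-space estimate, or some substitute for it, is actually carried out, the necessity direction is not established for this family, and hence the classification is not proved.

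For context, this is also the thinnest point of the paper's own proof: after the same reductions it asserts that the only outstanding partitions with two distinct parts are $(2^2,1)$ and $(2^3,1)$ and routes every remaining case with $\lambda_r = 1$ through Proposition~\ref{sing Hilb}, whose hypothesis (at least three distinct parts in $\lambda^\circ \cup (1) = \lambda$) excludes $(a^k,1^u)$ with $u \geqslant 2$. So your finer bookkeeping is a genuine observation, and you cannot close the case merely by citing the paper's lemmas; note also that the semicontinuity reduction used inside Proposition~\ref{sing Hilb} does not help, since discarding the extra points lands on $(a^k,1)$, which is generically one of the smooth cases of conditions~(3)--(5) and so provides no excess to propagate. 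But identifying the missing case is not the same as resolving it: as written, your proposal establishes sufficiency and most of necessity, and leaves the decisive singularity computation undone.
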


\begin{proof}
  Remark~\ref{r:smth} already shows that each condition implies that the
  Hilbert scheme is smooth.  Hence, it suffices to prove that the Hilbert
  scheme has a singular point when the integer partition
  $\lambda \coloneqq (\lambda_1, \lambda_2, \dotsc, \lambda_r)$ does not
  satisfy Conditions~1--7.  To bypass Conditions~1 and~2, we must have
  $m \geqslant 3$ and $\lambda_r = 1$.  By Lemma~\ref{l:loc}, we may assume
  that $S$ is the spectrum of an algebraically closed field.  Using the
  splitting in \eqref{f:split}, we may also assume that $m > \lambda_1$.  For
  the remaining integer partitions with one distinct part,
  Example~\ref{e:4pts} describes a singularity.  When the integer partition
  has two distinct parts, there are three outstanding cases, namely
  $\lambda = (2^2,1)$, $\lambda = (2^3,1)$, or
  $\lambda = (\smash{\lambda_1^{r-s-2}}, 1^{s+2})$ where
  $r-1 \geqslant s \geqslant 0$ and $\lambda_1 > 1$.  Example~\ref{e:gaps} and
  Proposition~\ref{p:sing} exhibit their singularities.  Finally,
  Proposition~\ref{p:sing} also identifies a singular point whenever the
  integer partition has at least three distinct parts.
\end{proof}

\subsection*{Acknowledgements}
We thank Dave Anderson, Sam Payne, Mike Roth, and Michael~E.~Stillman for
their suggestions.  Computational experiments done in
\emph{Macaulay2}~\cite{M2} were indispensable.  GGS was partially supported by
the Natural Sciences and Engineering Research Council of Canada (NSERC) and
the Knut and Alice Wallenberg Foundation.

%\vfill

\begin{bibdiv}
  \begin{biblist}%[\normalsize]

    \bib{Bea}{article}{
      author={Beauville, Arnaud},
      title={\href{https://projecteuclid.org/euclid.jdg/1214438181}%
        {Vari\'{e}t\'{e}s K\"{a}hleriennes dont la premi\`ere classe de
          Chern est nulle}},
      % language={French},
      journal={J. Differential Geom.},
      volume={18},
      date={1983},
      number={4},
      pages={755--782},
      % issn={0022-040X},
      % review={\MR{730926}},
      % doi={10.4310/jdg/1214438181}
    }    
    
    \bib{BH}{book}{
      author={Bruns, Winfried},
      author={Herzog, J\"{u}rgen},
      title={\href{https://doi.org/10.1017/CBO9780511608681}%
        {Cohen-Macaulay rings}},
      series={Cambridge Studies in Advanced Mathematics},
      volume={39},
      publisher={Cambridge University Press, Cambridge},
      date={1993},
      pages={xii+403},
      % isbn={0-521-41068-1},
      % review={\MR{1251956}},
      % doi={10.1017/CBO9780511608681},
    }

    \bib{CCN}{article}{
      author={Chen, Dawei},
      author={Coskun, Izzet},
      author={Nollet, Scott},
      title={\href{https://doi.org/10.1080/00927872.2010.498396}%
        {Hilbert scheme of a pair of codimension two linear subspaces}},
      journal={Comm. Algebra},
      volume={39},
      date={2011},
      number={8},
      pages={3021--3043},
      % issn={0092-7872},
      % review={\MR{2834144}},
      % doi={10.1080/00927872.2010.498396},
    }

    \bib{Che}{article}{
      author={Cheah, Jan},
      title={\href{https://doi.org/10.2140/pjm.1998.183.39}%
        {Cellular decompositions for nested Hilbert schemes of points}},
      journal={Pacific J. Math.},
      volume={183},
      date={1998},
      number={1},
      pages={39--90},
      % issn={0030-8730},
      % review={\MR{1616606}},
      % doi={10.2140/pjm.1998.183.39},
    }        

    \bib{EGA}{book}{
      label={EGA71},
      author={Grothendieck, Alexander},
      author={Dieudonn\'{e}, Jean A.},
      title={El\'{e}ments de g\'{e}om\'{e}trie alg\'{e}brique~I},
      edition={2},
      series={Grundlehren der Mathematischen Wissenschaften},
      volume={166},
      publisher={Springer-Verlag, Berlin},
      date={1971},
      pages={ix+466},
      % isbn={3-540-05113-9},
      % isbn={0-387-05113-9},
      % review={\MR{3075000}},
    }    

    \bib{EHM}{article}{
      author={Ellia, Philippe},
      author={Hirschowitz, Andr\'{e}},
      author={Mezzetti, Emilia},
      title={\href{https://doi.org/10.1142/S0129167X92000369}%
        {On the number of irreducible components of the Hilbert scheme of
          smooth space curves}},      
      journal={Internat. J. Math.},
      volume={3},
      date={1992},
      number={6},
      pages={799--807},
      % issn={0129-167X},
      % review={\MR{1194072}},
      % doi={10.1142/S0129167X92000369},
    }    

    \bib{Fogarty}{article}{
      author={Fogarty, John},
      title={\href{https://doi.org/10.2307/2373541}%
        {Algebraic families on an algebraic surface}},
      journal={Amer. J. Math},
      volume={90},
      date={1968},
      pages={511--521},
      % issn={0002-9327},
      % review={\MR{0237496}},
      % doi={10.2307/2373541},
    }

    \bib{Fulton}{book}{
      author={Fulton, William},
      title={\href{https://doi.org/10.1007/978-1-4612-1700-8}%
        {Intersection theory}},
      series={Ergebnisse der Mathematik und ihrer Grenzgebiete, 3rd Series},
      volume={2},
      edition={2},
      publisher={Springer-Verlag, Berlin},
      date={1998},
      pages={xiv+470},
      % isbn={3-540-62046-X},
      % isbn={0-387-98549-2},
      % review={\MR{1644323}},
      % doi={10.1007/978-1-4612-1700-8},
    }

    % \bib{FGA}{book}{
    %   label={FGA62},
    %   author={Grothendieck, Alexander},
    %   title={Fondements de la g\'{e}om\'{e}trie alg\'{e}brique},
    %   series={Extraits du S\'{e}minaire Bourbaki, 1957--1962},
    %   publisher={Secr\'{e}tariat math\'{e}matique, Paris},
    %   date={1962},
    %   pages={ii+205},
    %   % review={\MR{0146040}},
    % }

    \bib{GKP}{book}{
      author={Graham, Ronald L.},
      author={Knuth, Donald E.,},
      author={Patashnik, Oren},
      title={\href{https://www-cs-faculty.stanford.edu/~knuth/gkp.html}%
        {Concrete mathematics}},
      edition={2},
      % note={A foundation for computer science},
      publisher={Addison-Wesley Publishing Company, Reading, MA},
      date={1994},
      pages={xiv+657},
      % isbn={0-201-55802-5},
      % review={\MR{1397498}},
    }    
    
    \bib{Got}{article}{
      author={Gotzmann, Gerd},
      title={\href{https://doi.org/10.1007/BF01214566}%
        {Eine Bedingung f\"{u}r die Flachheit und das Hilbertpolynom eines
          graduierten Ringes}},
      journal={Math. Z.},
      volume={158},
      date={1978},
      number={1},
      pages={61--70},
      % issn={0025-5874},
      % review={\MR{480478}},
      %doi={10.1007/BF01214566},
    }

    \bib{Groj}{article}{
      author={Grojnowski, I.},
      title={\href{https://doi.org/10.4310/MRL.1996.v3.n2.a12}%
        {Instantons and affine algebras. I. The Hilbert scheme and vertex
          operators}},
      journal={Math. Res. Lett.},
      volume={3},
      date={1996},
      number={2},
      pages={275--291},
      % issn={1073-2780},
      % review={\MR{1386846}},
      % doi={10.4310/MRL.1996.v3.n2.a12},
    }

    \bib{Hai}{article}{
      author={Haiman, Mark},
      title={\href{https://doi.org/10.1090/S0894-0347-01-00373-3}%
        {Hilbert schemes, polygraphs and the Macdonald positivity
          conjecture}},
      journal={J. Amer. Math. Soc.},
      volume={14},
      date={2001},
      number={4},
      pages={941--1006},
      % issn={0894-0347},
      % review={\MR{1839919}},
      % doi={10.1090/S0894-0347-01-00373-3},
    }

    \bib{Har}{article}{
      author={Hartshorne, Robin},
      title={\href{http://www.numdam.org/item/PMIHES_1966__29__5_0/}%
        {Connectedness of the Hilbert scheme}},
      journal={Inst. Hautes \'{E}tudes Sci. Publ. Math.},
      number={29},
      date={1966},
      pages={5--48},
      % issn={0073-8301},
      % review={\MR{213368}},
    }

    % \bib{HSS}{article}{
    %   author={Heinrich, Katharina},
    %   author={Skjelnes, Roy},
    %   author={Stevens, Jan},
    %   title={The space of twisted cubics},
    %   date={2019},
    %   status={available at
    %     \href{https://arxiv.org/abs/1904.01102}{\texttt{arXiv:1904.01102}}}
    % }    
    
    \bib{KM}{book}{
      author={Katz, Nicholas M.},
      author={Mazur, Barry},
      title={\href{https://doi.org/10.1515/9781400881710}%
        {Arithmetic moduli of elliptic curves}},
      series={Annals of Mathematics Studies},
      volume={108},
      publisher={Princeton University Press, Princeton, NJ},
      date={1985},
      pages={xiv+514},
      % isbn={0-691-08349-5},
      % isbn={0-691-08352-5},
      % review={\MR{772569}},
      % doi={10.1515/9781400881710},
    } 

    \bib{Kollar}{book}{
      author={Koll\'{a}r, J\'{a}nos},
      title={\href{https://doi.org/10.1007/978-3-662-03276-3}%
        {Rational curves on algebraic varieties}},
      series={Ergebnisse der Mathematik und ihrer Grenzgebiete. 3rd Series.},
      volume={32},
      publisher={Springer-Verlag, Berlin},
      date={1996},
      pages={viii+320},
      % isbn={3-540-60168-6},
      % review={\MR{1440180}},
      % doi={10.1007/978-3-662-03276-3},
    }

    \bib{Lin}{misc}{
      author={Lin, Kevin H.},
      title={When are Hilbert schemes smooth?},
      publisher={\href{https://mathoverflow.net/q/244}{MathOverflow,
          Question~244}},
      date={2009}
    }     

    \bib{M2}{misc}{
      label={M2},
      author={Grayson, Daniel~R.},
      author={Stillman, Michael~E.},
      title={Macaulay2, a software system for research
        in algebraic geometry},
      publisher={available at \url{http://www.math.uiuc.edu/Macaulay2/}},
    }

    \bib{Mac15}{book}{
      author={Macdonald, Ian G.},
      title={Symmetric functions and Hall polynomials},
      series={Oxford Classic Texts in the Physical Sciences},
      edition={2},
      % note={With contribution by A. V. Zelevinsky and a foreword by Richard
      %   Stanley;
      %   Reprint of the 2008 paperback edition [ MR1354144]},
      publisher={The Clarendon Press, Oxford University Press, New York},
      date={2015},
      % pages={xii+475},
      % isbn={978-0-19-873912-8},
      % review={\MR{3443860}},
    }    

    \bib{Mac}{article}{
      author={Macaulay, Francis S.},
      title={\href{https://doi.org/10.1112/plms/s2-26.1.531}%
        {Some Properties of Enumeration in the Theory of Modular Systems}},
      journal={Proc. London Math. Soc. (2)},
      volume={26},
      date={1927},
      pages={531--555},
      % issn={0024-6115},
      % review={\MR{1576950}},
      % doi={10.1112/plms/s2-26.1.531},
    }

    \bib{Mum}{article}{
      author={Mumford, David},
      title={\href{https://doi.org/10.2307/2372870}%
        {Further pathologies in algebraic geometry}},
      journal={Amer. J. Math.},
      volume={84},
      date={1962},
      pages={642--648},
      % issn={0002-9327},
      % review={\MR{148670}},
      % doi={10.2307/2372870},
    }

    \bib{Nak}{article}{
      author={Nakajima, Hiraku},
      title={\href{https://doi.org/10.2307/2951818}%
        {Heisenberg algebra and Hilbert schemes of points on projective
          surfaces}},
      journal={Ann. of Math. (2)},
      volume={145},
      date={1997},
      number={2},
      pages={379--388},
      % issn={0003-486X},
      % review={\MR{1441880}},
      % doi={10.2307/2951818},
    }

    \bib{PeS}{article}{
      author={Peeva, Irena},
      author={Stillman, Michael~E.},
      title={\href{https://doi.org/10.1016/j.exmath.2007.10.003}%
        {The minimal free resolution of a Borel ideal}},
      journal={Expo. Math.},
      volume={26},
      date={2008},
      number={3},
      pages={237--247},
      % issn={0723-0869},
      % review={\MR{2437094}},
      % doi={10.1016/j.exmath.2007.10.003},
    }    

    \bib{PS}{article}{
      author={Piene, Ragni},
      author={Schlessinger, Michael},
      title={\href{https://doi.org/10.2307/2374355}%
        {On the Hilbert scheme compactification of the space of twisted
          cubics}},
      journal={Amer. J. Math.},
      volume={107},
      date={1985},
      number={4},
      pages={761--774},
      % issn={0002-9327},
      % review={\MR{796901}},
      % doi={10.2307/2374355},
    }
     
    \bib{Ramkumar}{article}{
      author={Ramkumar, Ritvik},
      title={Hilbert schemes with few Borel fixed points},
      date={2019},
      status={available at
        \href{https://arxiv.org/abs/1907.13335}{\texttt{arXiv:1907.13335v3}}}
    }

    \bib{Ree}{article}{
      author={Reeves, Alyson A.},
      title={The radius of the Hilbert scheme},
      journal={J. Algebraic Geom.},
      volume={4},
      date={1995},
      number={4},
      pages={639--657},
      % issn={1056-3911},
      % review={\MR{1339842}},
    }    
		
    \bib{RS}{article}{
      author={Reeves, Alyson A.},
      author={Stillman, Michael~E.},
      title={Smoothness of the lexicographic point},
      journal={J. Algebraic Geom.},
      volume={6},
      date={1997},
      number={2},
      pages={235--246},
      % issn={1056-3911},
      % review={\MR{1489114}},
    }
    
    \bib{St1}{article}{
      author={Staal, Andrew P.},
      title={\href{https://doi.org/10.1007/s00209-020-02479-8}%
        {The ubiquity of smooth Hilbert schemes}},
      journal={Math. Z.},
      volume={296},
      date={2020},
      number={3-4},
      pages={1593--1611},
      % issn={0025-5874},
      % review={\MR{4159841}},      
    }

    \bib{St2}{article}{
      author={Staal, Andrew P.},
      title={Hilbert schemes with two Borel-fixed points in arbitrary
        characteristic},
      date={2019},
      status={preprint}
    } 
    
    % \bib{Stevens}{article}{
    %    author={Stevens, Jan},
    %    title={\href{https://projecteuclid-org.proxy.queensu.ca/euclid.em/1047931622}%
    %      {Computing versal deformations}},
    %    journal={Experiment. Math.},
    %    volume={4},
    %    date={1995},
    %    number={2},
    %    pages={129--144},
    %    % issn={1058-6458},
    %    % review={\MR{1377414}},
    %  }

    \bib{Vak}{article}{
      author={Vakil, Ravi},
      title={\href{https://doi.org/10.1007/s00222-005-0481-9}%
        {Murphy's law in algebraic geometry:
        badly-behaved deformation spaces}},
      journal={Invent. Math.},
      volume={164},
      date={2006},
      number={3},
      pages={569--590},
      % issn={0020-9910},
      % review={\MR{2227692}},
      % doi={10.1007/s00222-005-0481-9},
    }
    
  \end{biblist}
\end{bibdiv}

\raggedright

\end{document}